\newtheorem{theorem}{Theorem}[section]
\newtheorem{lemma}[theorem]{Lemma}
\theoremstyle{definition}
\newtheorem{definition}[theorem]{Definition}
\newtheorem{proposition}[theorem]{Proposition}
\newtheorem{corollary}[theorem]{Corollary}
\newtheorem{example}[theorem]{Example}
\theoremstyle{remark}
\newtheorem{remark}[theorem]{Remark}
\def\imag{\operatorname{Im}}
\def\interior{\operatorname{Int}}
\numberwithin{equation}{section}
\begin{document}

\title{Lyapunov graphs of nonsingular Smale flows on $S^{1}\times S^{2}$}

%    Information for first author
\author{Bin Yu}
%    Address of record for the research reported here
\address{Department of Mathematics, Tongji University, Shanghai, China 20092}

%    \thanks will become a 1st page footnote.
\thanks{The author was supported in part by the NSFC (grant no. 11001202).}

%    General info
\subjclass[2000]{37D20, 37C15, 37E99.}

\date{?????  ?, 2010 and, in revised form, ????? ?, 201?.}

\keywords{3-manifold, $S^1 \times S^2$, nonsingular Smale flow,
Lyapunov graph}

\begin{abstract}
 In this paper, following J. Franks' work on Lyapunov graphs
 of nonsingular Smale flows on $S^3$, we study Lyapunov graphs
 of nonsingular Smale flows on $S^1 \times S^2$. More
 precisely, we determine necessary and  sufficient conditions on an abstract Lyapunov graph
  to be  associated with a nonsingular Smale flow on $S^1 \times S^2$.
 We also study the singular type vertices in Lyapunov
  graphs of nonsingular Smale flows on 3-manifolds.
\end{abstract}
\maketitle

\section{Introduction} \label{section1}

Lyapunov graphs were first introduced in dynamics by J. Franks in
his paper \cite{F5}. For a smooth flow $\phi_{t}:M\rightarrow M$
with a Lyapunov function $f:M\rightarrow R$, a Lyapunov graph $L$ is
a rather natural object. The idea is to construct an oriented graph
by identifying to a point each component of $f^{-1}(c)$ for each
$c\in R$. A Lyapunov graph gives
     a global picture of how the basic sets of a smooth flow are
     situated on the underlying manifold. It is important to note
     that a Lyapunov graph  does not always determine a unique flow
     up to topological equivalence.

In \cite{F5}, J. Franks used Lyapunov graph to classify nonsingular
Smale flows (abbreviated as \textit{NS flows}) on $S^3$. More
precisely, J. Franks determined  necessary and  sufficient
conditions on an abstract Lyapunov graph
  to be associated with an NS flow on $S^3$.
 Following the idea of J. Franks, K. de Rezende and her cooperators
      classified Smale flows on $S^3$ (\cite{Re1}), gradient-like flows on 3-manifolds (\cite{Re2})
      and smooth flows on surfaces (\cite{RF}).

      NS flows were first introduced by J. Franks in the 1980s, see
\cite{F1}, \cite{F2}, and \cite{F5}. An NS flow is a structurally
stable flow with one-dimensional invariant sets and without
singularities. More restricted Smale flows, namely nonsingular
Morse-Smale flows (abbreviated as NMS flows),  have been effectively
studied by associating NMS flows with a kind of combinatorial tool,
i.e. round handle decomposition, see \cite{As}, \cite{Mo} and
\cite{Wa}. However, for general NS flows, the situation becomes more
complicated. M. Sullivan \cite{Su} and the author \cite{Yu1}
       used template and knot theory to describe more embedding
       information of a special type of nonsingular Smale flows on
       $S^{3}$. J. Franks (\cite{F1},  \cite{F2}) used homology to describe some
       embedding information of NS flows on
       3-manifolds.

More recently, F. B\'eguin and C. Bonatti \cite{BB} gave several new
concepts and results to describe the behavior of a neighborhood of a
basic set of a Smale flow on a 3-manifold.  Their paper is an
important step in the classification, up to topological equivalence,
of NS flows on 3-manifolds. It provided useful canonical
neighborhoods of one-dimensional hyperbolic basic sets on
3-manifolds, i.e. filtrating neighborhoods. We note that the same
concept was also discovered by J. Franks in \cite{F5}, who named it
``building block".

The work of F. B\'eguin and C. Bonatti \cite{BB} indicates that  the
topological structures of all Lyapunov graphs  of a Smale flow on a
3-manifold are the same and a filtrating neighborhood is always
associated with a vertex in a Lyapunov graph (More details could be
found in Section \ref{section6}). These facts convinces us that a
generalization of J. Franks' work for all 3-manifolds is an
efficient way of understanding NS flows on 3-manifolds. In this
direction, N. Oka has made some progress in \cite{Ok}. He
generalized J. Franks' work to all lens
 spaces with odd order fundamental group. The results
 and the techniques in this paper  are similar to J. Franks'.
 For example, J. Franks  proved that a Lyapunov graph of any NS flow on
$S^{3}$ must be a tree and the weight of any edge of $L$ is 1, i.e.,
 the regular level sets of any NS flow on $S^{3}$ must be tori. The same phenomena
 occur
 in the study of NS flows on a lens space with odd order fundamental
 group.

In this paper,  we mainly study Lyapunov graphs of NS flows  on
$S^{1}\times S^{2}$. The reasons why we choose $S^{1}\times S^{2}$
are:
\begin{itemize}
\item a Lyapunov graph of an NS flow on $S^{1}\times S^{2}$ may not be a
tree;
\item the weight of an edge of a Lyapunov graph of an NS flow may not be 1;
\item for our purpose, the topology of $S^{1}\times S^{2}$ is well understood.
\end{itemize}
We use three theorems, i.e., Theorem \ref{theorem5.1}, Theorem
\ref{theorem5.2} and Theorem \ref{theorem5.5} to determine necessary
and sufficient conditions on an abstract Lyapunov graph to be
associated with an NS flow on $S^1 \times S^2$. The fact that we
need three theorems is due to some topological information of $S^1
\times S^2$ (see Proposition \ref{proposition4.3}).

In addition, to generalize  J. Franks' work for all 3-manifolds,  we
find that it is important to consider a kind of vertices, i.e.,
singular vertices (the definition can be found in Section
\ref{section6}). In the end of this paper, we
 study singular vertices of NS flows on irreducible
3-manifolds. The main result (Proposition \ref{proposition6.3}) is
that the number of the  singular vertices in a  Lyapunov graph
associated with an NS flow  on an irreducible, closed orientable
3-manifold is restricted by  the Haken number of the 3-manifold.

This paper is organized as follows. In Section \ref{section2}, we
give some definitions and detailed background
     knowledge for Lyapunov graphs of NS flows on 3-manifolds. In Section \ref{section3}, we discuss some
     connections between Smale flows and homology. In Section \ref{section4}, we prove some general properties
     about NS flows on $S^1 \times S^2$. In Section \ref{section5}, we state and
     prove the main theorems of this paper. In Section \ref{section6} we obtain some results on singular vertices of Lyapunov
     graphs of NS flows on 3-manifolds.

\section{Lyapunov graph of nonsingular Smale flow} \label{section2}

 A smooth flow $\phi_{t}$ on
a compact manifold $M$ is called a \textit{Smale flow} if:
\begin{enumerate}

\item the chain recurrent set $R(\phi_{t})$ has hyperbolic structure;

\item  $\dim (R(\phi_{t}))\leq 1$;

\item $\phi_{t}$ satisfies the transversality condition and noncyclic condition.
\end{enumerate}
If a Smale flow $\phi_{t}$ has no singularities, we call
  $\phi_{t}$  a \textit{nonsingular Smale flow} (abbreviated as an \textit{NS flow}).
   In particular,  if $R(\phi_{t})$ consists entirely of
closed orbits, $\phi_{t}$ is called a \textit{nonsingular
Morse-Smale flow} (abbreviated as an \textit{NMS flow}). For a Smale
flow $\phi_t$,
   if $R(\phi_{t})$ consists entirely of singularities,
  $\phi_{t}$ is called a \textit{gradient-like flow}.

A diffeomorphism $f$ on a closed manifold $M$ is called a
\emph{Morse-Smale} diffeomorphism if:
\begin{enumerate}
\item the chain recurrent set $R(f)$ has hyperbolic structure and consists entirely of singularities;
\item for any $x, y \in R(f)$, the stable manifold $W^s (x)$ is transverse
to the unstable manifold $W^u (y)$.
\end{enumerate}
A Morse-Smale diffeomorphism $f$ is called a \emph{gradient-like}
diffeomorphism if for any $x, y \in R(f)$, $W^s (x)\cap W^u (y)\neq
\emptyset$ forces that $\dim W^s (x)> \dim W^s (y)$.

\begin{definition}
 An \emph{abstract Lyapunov graph} is a
finite, connected and oriented graph $L$ satisfying the following
two conditions:

(1) $L$ possesses no oriented cycles;

(2) each vertex of $L$ is labeled with a chain recurrent flow on a
compact space.
\end{definition}

\begin{theorem} \textbf{(R. Bowen \cite{Bo})}

If $\phi_{t}$ is a flow with hyperbolic chain recurrent set and
$\Lambda$ is a 1-dimensional basic set, then $\phi_{t}$ restricted
to $\Lambda$ is topologically equivalent to the suspension of a
basic subshift of finite type (i.e., a subshift associated with an
irreducible matrix (abbreviated as SSFT)).
\end{theorem}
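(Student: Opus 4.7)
The plan is to pass from the continuous-time basic set $\Lambda$ to a discrete-time hyperbolic system via a cross-section, and then apply Bowen's Markov partition machinery for hyperbolic homeomorphisms. Since $\Lambda$ is a $1$-dimensional hyperbolic invariant set of a flow, its tangent splitting along $\Lambda$ has the form $E^{s}\oplus E^{c}\oplus E^{u}$, with $E^{c}$ the flow direction; intersecting $\Lambda$ with any small disk transverse to the flow produces a $0$-dimensional compact set carrying a local product structure inherited from $W^{s}$ and $W^{u}$.

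First I would choose a finite collection of small embedded disks $D_{1},\dots,D_{n}$ transverse to $\phi_{t}$ such that every orbit of $\Lambda$ meets $\bigsqcup D_{i}$ within a uniform time window $T_{0}$, and the boundaries $\partial D_{i}$ avoid $\Lambda$. The first-return map $P:\bigsqcup (D_{i}\cap\Lambda)\to\bigsqcup (D_{i}\cap\Lambda)$ is then a hyperbolic homeomorphism of a totally disconnected compact space, on which I would build a Markov partition $\{R_{1},\dots,R_{N}\}$ by Bowen's standard construction: start with rectangles of diameter small compared to the expansivity constant, take forward and backward images under $P$, and cut along stable/unstable boundaries to obtain pieces satisfying the Markov property.

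The transition matrix $A$ defined by $a_{ij}=1$ iff $P(\mathrm{int}\,R_{i})\cap\mathrm{int}\,R_{j}\neq\emptyset$ then yields a conjugacy (up to a zero-measure boundary set that is folded in by a finite-to-one semiconjugacy) between $(P,\bigsqcup R_{i})$ and the subshift of finite type $(\sigma,\Sigma_{A})$. Irreducibility of $A$ follows because $\phi_{t}|_{\Lambda}$ admits a dense orbit (as $\Lambda$ is a basic set), which descends to a dense orbit of $P$ and forces any symbol to communicate with any other. Finally, letting $\tau:\bigsqcup R_{i}\to\mathbb{R}_{>0}$ be the continuous return-time function, the map $(x,s)\mapsto \phi_{s}(x)$ identifies the suspension of $P$ under $\tau$ with $(\Lambda,\phi_{t})$; composing with the symbolic conjugacy and reparametrising time identifies $\phi_{t}|_{\Lambda}$, up to topological equivalence, with the suspension of the SSFT associated to $A$.

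I expect the chief obstacle to lie in the second step, namely producing Markov sections whose boundaries are \emph{proper} in the sense of Bowen–Ratner: a naive choice of transverse disks generally has corners and boundary pieces that are neither stable nor unstable, which would destroy the Markov condition after iteration. The standard remedy is an intricate combinatorial refinement using the local product structure, and this is the point at which hyperbolicity and $\dim \Lambda = 1$ (so that the transverse slices are $0$-dimensional and admit clean rectangular decompositions with empty interior boundary) are used in an essential way.
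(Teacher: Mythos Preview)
The paper does not give a proof of this statement at all: it is quoted as Bowen's theorem with a citation to \cite{Bo}, and is used only as background to justify labelling vertices of Lyapunov graphs by nonnegative integer matrices. There is therefore no ``paper's own proof'' to compare against.

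That said, your outline is essentially the standard argument (and is close in spirit to Bowen's original). One refinement worth noting: because $\Lambda$ is $1$-dimensional, the cross-sections $D_i\cap\Lambda$ are compact and totally disconnected, so you can choose the Markov rectangles to be \emph{clopen} subsets. In that case there is no boundary set to worry about and the semiconjugacy you mention is in fact a genuine topological conjugacy between $P$ and $(\sigma,\Sigma_A)$; the delicate Bowen--Ratner boundary refinement you flag as the main obstacle is not actually needed in this zero-dimensional setting. You allude to this at the end, but your middle paragraph still speaks of a ``zero-measure boundary set'' and a ``finite-to-one semiconjugacy'', which overstates the difficulty here.
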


This theorem enables us to label a vertex of the (abstract) Lyapunov
graph that represents the 1-dimensional basic sets of a Smale flow
with the suspension of an SSFT $\sigma(A)$. For simplicity we will
label the vertex with the nonnegative integer irreducible matrix
$A$. For a vertex labeled with matrix $A=(a_{ij})$, let
$B=(b_{ij})$, where $b_{ij}\in \mathbb{Z}/2$ and $b_{ij}\equiv
a_{ij} (mod 2)$ and $k=\dim \ker((I-B):F_{2}^{m}\rightarrow
F_{2}^{m})$, $F_{2}=\mathbb{Z}/2$. The number of incoming (outgoing)
edges is denoted by $e^{+}$ ($e^{-}$). Denote the weight of an edge
of a Lyapunov graph by the genus of the regular level set of the
edge. Let $g_{j}^{+}$ ($g_{j}^{-}$) be the weight on an incoming
(outgoing) edge of the vertex. In this paper, we always denote the
initial vertex and the terminal vertex of an oriented edge $E$ by
$i(E)$ and $t(E)$ respectively. Whenever we talk about a Lyapunov
graph $L$ of an NS flow on a 3-manifold $M$, L is always associated
with a Lyapunov function $g:M\rightarrow R$ and a map
$h:M\rightarrow L$ such that $g=\pi \circ h$. Here $\pi:
L\rightarrow R$ is the natural projection.

The following classification theorem is due to J. Franks \cite{F5}:

\begin{theorem}\label{theorem2.4}
Let $L$ be an abstract Lyapunov graph $L$. $L$ is associated with an
NS flow $\phi_{t}$ on $S^{3}$ if and only if the following
conditions hold.
\begin{enumerate}

\item The underlying graph $L$ is a tree with exactly one edge attached
to each closed orbit sink or closed orbit source vertex. The weight
of any edge of $L$ is 1.

\item If a vertex is labeled with an SSFT with matrix $A_{m\times
m}$, then
\begin{equation}\label{1}
 \begin{split}
    &0< e^{+}\leq k+1, 0< e^{-}\leq k+1\text{\and}\\
    &k+1\leq e^{+}+ e^{-}.
 \end{split}\nonumber
\end{equation}
\end{enumerate}
\end{theorem}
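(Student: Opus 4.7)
The plan is to prove the two directions separately, with the necessity direction resting on topological constraints of $S^{3}$ and the sufficiency direction on a building-block construction.

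For necessity, assume $\phi_{t}$ is an NS flow on $S^{3}$ with Lyapunov function $g$ and associated graph $L$. The tree property follows from the fact that every closed orientable surface in $S^{3}$ separates (equivalently, $H_{2}(S^{3};\mathbb{Z}/2)=0$): each edge of $L$ corresponds to a connected component of a regular level set $g^{-1}(c)$, and if $L$ contained a cycle then gluing consecutive regions along their shared boundary components would produce a closed loop in the quotient whose $h$-preimage would trace out a non-separating surface, a contradiction. For the weight assertion I would argue inductively on the number of vertices: a closed orbit sink/source vertex has a solid-torus filtrating neighborhood whose boundary is a single torus, and proceeding up the tree one shows that after gluing the next filtrating neighborhood the remaining level surfaces must still be tori, because a Heegaard-type argument on $S^{3}$ forces any separating surface appearing as a regular level set of a structurally stable flow to have genus $1$. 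A single-edge-per-sink/source condition is then immediate from the local picture of a closed orbit attractor/repeller.

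For the vertex inequalities, I would pass to a filtrating (``building block'') neighborhood $N(\Lambda)$ of each $1$-dimensional basic set $\Lambda$ labeled by the matrix $A$. Since all boundary tori of $N(\Lambda)$ are tori, one computes $\chi(\partial N(\Lambda))=0$ and relates the $\mathbb{Z}/2$ Betti numbers of $N(\Lambda)$ to $k=\dim\ker(I-B)$ via the Wang/Mayer--Vietoris sequence of the suspension of the SSFT. The bounds $e^{+},e^{-}\le k+1$ reflect that each incoming/outgoing boundary torus contributes independently to $H_{1}(N(\Lambda);\mathbb{Z}/2)$, whose rank is controlled by $k+1$; the bound $k+1\le e^{+}+e^{-}$ is dual, arising from the requirement that $\partial N(\Lambda)$ carry enough $1$-cycles to kill the relevant classes in $H_{1}(N(\Lambda);\mathbb{Z}/2)$.

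For sufficiency, given an abstract $L$ satisfying (1) and (2), I build the flow piece by piece. Closed-orbit sink/source vertices are realized by standard solid tori with the obvious attracting/repelling flow. For each SSFT vertex with matrix $A$ and parameters $(e^{+},e^{-})$ verifying condition (2), I invoke Franks' construction (later formalized as filtrating neighborhoods by B\'eguin--Bonatti) producing a compact $3$-manifold with $e^{+}+e^{-}$ boundary tori carrying a flow whose chain recurrent set is the suspension of $\sigma(A)$. These pieces are then glued along the edges of $L$ via $T^{2}\times I$ collars, using the tree structure of $L$ to orchestrate the gluing. Finally one verifies the glued manifold is $S^{3}$; since $L$ is a tree and each piece is built by attaching round handles of index $0,1,2,3$ according to the combinatorics of $L$, the resulting closed $3$-manifold admits a genus-$0$ Heegaard splitting, hence is $S^{3}$.

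The main obstacle is the vertex inequality (2), specifically pinning down how the algebraic invariant $k$ of the $\mathbb{Z}/2$-reduced transition matrix encodes the topology of $\partial N(\Lambda)$. The core technical step is to identify $k$ with (essentially) $\dim_{\mathbb{F}_{2}} H_{1}(\Sigma(A);\mathbb{F}_{2})-1$ for the suspension $\Sigma(A)$ and then to show, using the local stable/unstable lamination structure, that both the number of boundary components and their homology classes fit into the numerical bounds stated; everything else is then a matter of careful bookkeeping on the gluing.
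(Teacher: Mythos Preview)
The paper does not prove Theorem~\ref{theorem2.4}; it is quoted as a background result due to J.~Franks~\cite{F5} and used throughout without proof. So there is no ``paper's own proof'' to compare your proposal against.

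That said, since you have attempted a proof, let me flag the most serious gap. In the sufficiency direction you write: ``Finally one verifies the glued manifold is $S^{3}$; since $L$ is a tree and each piece is built by attaching round handles of index $0,1,2,3$ according to the combinatorics of $L$, the resulting closed $3$-manifold admits a genus-$0$ Heegaard splitting, hence is $S^{3}$.'' This is not correct. Gluing compact $3$-manifolds with torus boundary along a tree pattern does not in general yield $S^{3}$, and nothing in your outline forces a genus-$0$ Heegaard splitting. Franks' actual construction in~\cite{F5} works the other way around: he builds each building block \emph{inside} $S^{3}$ from the start (via an explicit Smale diffeomorphism on $S^{2}$, suspended and modified with the SI/SO/nilpotent handle trick that this paper reproduces in the proof of Lemma~\ref{lemma5.4}), so that each block is a priori a link complement in $S^{3}$ and the complementary solid tori can be replaced by the adjacent pieces. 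The identification of the ambient manifold is never in doubt because one never leaves $S^{3}$.

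A secondary weakness is in necessity: your argument that every regular level set has genus~$1$ (``a Heegaard-type argument on $S^{3}$ forces any separating surface appearing as a regular level set of a structurally stable flow to have genus~$1$'') is too vague to be a proof. The standard route (cf.\ the proof of Proposition~\ref{proposition4.3} in this paper) uses the Poincar\'e--Hopf formula of Lemma~\ref{lemma4.3} to get $\chi(\Sigma)=0$, together with the fact that $S^{2}$ cannot occur since $S^{3}$ is irreducible and the flow is nonsingular. Your sketch of the inequalities in~(2) is in the right spirit but would need the filtration/relative-homology computation of Theorem~\ref{theorem3.2} (and an argument like Corollary~\ref{corollary3.5}) made precise.
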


K. Rezende \cite{Re1} generalized J. Franks' theorem to Smale flows
on $S^3$:

\begin{theorem}\label{theorem2.41}
Let $L$ be an abstract Lyapunov graph. $L$ is  associated with a
Smale flow $\phi_{t}$ on $S^{3}$ if and only if the following
conditions hold.

(1) The underlying graph $L$ is a tree with exactly one edge
attached to each sink or source vertex.

(2) If a vertex is labeled with an SSFT with matrix $A_{m\times m}$,
then we have
\begin{equation}\label{1}
 \begin{split}
    &e^{+}> 0, e^{-}> 0,\\
    &k+1-\sum_{i=1}^{e^{-}}g_{i}^{-}\leq e^{+}\leq k+1  \text{\ and}\\
&k+1-\sum_{j=1}^{e^{+}}g_{j}^{+}\leq e^{-}\leq k+1.
 \end{split}\nonumber
\end{equation}

(3) All vertices must satisfy the Poinc\'are-Hopf condition. Namely,
if a vertex is labeled with a singularity of index r, then
\begin{equation}\label{1}
(-1)^{r}=e^{+}-e^{-}-\sum g_{j}^{+}+ \sum g_{i}^{-}, \nonumber
\end{equation}

and if a vertex is labeled with a suspension of an SSFT or a
periodic orbit, then
\begin{equation}\label{1}
0=e^{+}-e^{-}-\sum g_{j}^{+}+ \sum g_{i}^{-}.
 \nonumber
\end{equation}
\end{theorem}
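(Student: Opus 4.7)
The plan is to prove both directions: the ``only if'' direction derives topological and combinatorial restrictions from any Smale flow on $S^{3}$, while the ``if'' direction constructs a Smale flow realizing an abstract Lyapunov graph satisfying the three conditions. The overall strategy follows and extends Franks' proof of Theorem \ref{theorem2.4}, adding both singular vertices (giving rise to the Poinc\'are-Hopf condition) and regular level sets of arbitrary genus (giving rise to the genus-dependent inequalities).

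For necessity, the tree structure comes from $H_{1}(S^{3}) = 0$: any directed cycle in $L$ would produce a closed non-separating oriented surface in $S^{3}$, contradicting Alexander duality. The single-edge condition at each sink or source vertex reflects that an attracting or repelling basic set of a Smale flow has connected boundary in its trapping region. The SSFT inequalities in (2) come from refining the analysis in \cite{F5}: the number of connected components of the incoming (resp.\ outgoing) boundary of a filtrating neighborhood of a one-dimensional basic set with transition matrix $A$ is bounded above by $k+1$, where $k = \dim\ker(I-B)$ over $F_{2}$; the refined lower bound $k+1 - \sum g_{i}^{-} \le e^{+}$ trades excess boundary components for boundary genus via compression disks. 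The Poinc\'are-Hopf condition in (3) follows from computing the Euler characteristic of each filtrating neighborhood in two ways: from $\chi(\partial)/2$ one recovers the expression involving $e^{\pm}$ and the $g$'s, while from the interior recurrent set one gets $(-1)^{r}$ for an index-$r$ singularity and $0$ for closed orbits or SSFTs, since the suspension of an SSFT has vanishing Euler characteristic.

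For sufficiency, I would realize each vertex of $L$ by an explicit filtrating neighborhood: index-$r$ singularities by standard Morse models, closed-orbit vertices by appropriately twisted solid tori as in Franks, and SSFT vertices by Franks' torus-boundary building blocks modified by $1$-handle attachments internal to each boundary component so as to raise its genus to the prescribed weight. The inequalities in (2) guarantee that such a handle assignment is compatible with the SSFT dynamics. One then glues the neighborhoods along matching boundary surfaces following the edges of $L$, producing a closed orientable 3-manifold $M$ supporting a Smale flow by construction. An induction on the tree $L$, collapsing leaves one at a time while tracking the Euler characteristic via condition (3), shows that $M$ is a homotopy 3-sphere; the Poinc\'are conjecture then identifies $M$ with $S^{3}$.

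The main obstacle will be the SSFT step of the construction: producing a filtrating neighborhood of a given irreducible transition matrix $A$ with prescribed values of $e^{\pm}$ and $g^{\pm}_{j}$, and verifying that condition (3) is automatically compatible with condition (2) rather than imposing an independent constraint. Franks' original building blocks have torus boundary throughout, so the genuinely new work is the handle-attachment scheme that raises boundary genera; showing that every tuple permitted by (2) and (3) admits such a realization, and no more, is the technical heart of the extension to general Smale flows.
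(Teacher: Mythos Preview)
This theorem is not proved in the present paper; it is quoted from de~Rezende~\cite{Re1} as background (see the sentence immediately preceding the statement). So there is no in-paper proof to compare against, only the original source.

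Your necessity sketch is broadly in line with the standard argument: the tree property follows from $H_1(S^3)=0$, the Poincar\'e--Hopf identity in~(3) is exactly the content of Lemma~\ref{lemma4.3} here, and the bounds $e^\pm\le k+1$ are Franks' homological estimate (Theorem~\ref{theorem3.2} style), with the genus-dependent lower bounds coming from a compression/genus-trading count.

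Your sufficiency argument, however, has a real gap. Gluing abstract filtrating neighborhoods along matching boundary surfaces produces \emph{some} closed orientable $3$-manifold $M$, but ``tracking the Euler characteristic via condition~(3)'' cannot identify it: every closed orientable $3$-manifold has $\chi=0$, so Euler characteristic gives no information toward $M$ being a homotopy sphere. Even granting an appeal to the Poincar\'e conjecture (anachronistic for a 1987 result, and in any case overkill), you have not indicated any mechanism that controls $\pi_1(M)$ or $H_*(M)$ under these gluings; two solid tori glued along their torus boundaries already realize every lens space, so abstract gluing along higher-genus surfaces is far from forcing $S^3$. The actual construction in~\cite{Re1}, as in Franks~\cite{F5}, proceeds the other way: each SSFT block is built explicitly \emph{inside} $S^3$ as a link complement, so that its complement in $S^3$ is a disjoint union of handlebodies of the prescribed genera, and the remaining vertices are then realized inductively inside those complementary pieces. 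The inequalities in~(2) are exactly what make this inside-$S^3$ construction succeed. Your handle-attachment idea to raise boundary genus is on the right track, but it must be carried out as an embedded modification in $S^3$, not as an abstract gluing followed by a recognition step.
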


The following theorem is due to \cite{CR}:
\begin{theorem}\label{theorem2.5}
Suppose a closed orientable 3-manifold M admits
      an NS flow $\phi_{t}$ with Lyapunov graph $L$, then $\beta_{1}(L)\leq \beta_{1}(M)$.
      Here $\beta_{1}(L)$ and $\beta_{1}(M)$ are the first Betti
      numbers of $L$ and $M$ respectively.
\end{theorem}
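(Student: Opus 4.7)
The strategy is to show that the quotient map $h\colon M\to L$ associated with the Lyapunov function induces a surjection $h_*\colon H_1(M;\mathbb{Q})\to H_1(L;\mathbb{Q})$; since these are finite-dimensional $\mathbb{Q}$-vector spaces, surjectivity immediately yields $\beta_1(L)\leq \beta_1(M)$.

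To produce preimages of a basis of $H_1(L;\mathbb{Q})$, I would fix a maximal tree $T\subset L$, so that $H_1(L;\mathbb{Q})$ is freely generated by the $\beta_1(L)$ combinatorial cycles $\gamma_1,\ldots,\gamma_k$, one for each edge $E_i$ outside $T$ (namely the unique embedded cycle in $T\cup E_i$). For each vertex $v$ of $L$, I would choose a basepoint $x_v$ in the fiber $h^{-1}(v)$. For each oriented edge $e$ of $L$ from $u$ to $v$, I would pick an arc $\sigma_e\subset M$ from $x_u$ to $x_v$ whose image under $h$ traces $e$ once; concretely, take a trajectory segment through the collar $h^{-1}(\operatorname{int}(e))\cong \Sigma_e\times(0,1)$ and glue on short connecting paths inside a saturated neighborhood of $h^{-1}(u)\cup h^{-1}(v)$. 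Concatenating the $\sigma_e$ along the edges of $\gamma_i$ then yields a loop $\widetilde\gamma_i$ in $M$ with $h_*[\widetilde\gamma_i]=[\gamma_i]$, proving the desired surjectivity.

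The step I expect to be the main obstacle is verifying that every fiber $h^{-1}(v)$ (or at least an arbitrarily small $h$-saturated neighborhood of it) is path-connected, so that the concatenation above is legitimate. This should follow from two ingredients: each basic set $\Lambda_v$ of an NS flow is connected, as an equivalence class of the chain-recurrence relation on which the flow is topologically transitive; and a filtrating neighborhood in the sense of B\'eguin--Bonatti (equivalently a ``building block'' in the sense of Franks) deformation retracts onto $\Lambda_v$. Combined with the fact that $M$ is locally path-connected, this yields the path-connectedness needed, and the rest of the argument is a routine lift-along-a-graph construction.
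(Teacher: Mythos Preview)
The paper does not prove Theorem~\ref{theorem2.5}; it is quoted from Cruz--de Rezende \cite{CR}, so there is no in-paper argument to compare against. Your overall strategy---showing that the quotient map $h\colon M\to L$ induces a surjection on $H_1(\,\cdot\,;\mathbb{Q})$ by lifting a basis of cycles of $L$---is sound and is one of the standard ways to see this inequality.

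However, the step you single out as the ``main obstacle'' is both easier than you suggest and supported by a false claim. It is \emph{not} true that a filtrating neighborhood deformation retracts onto its basic set: for a nontrivial saddle basic set $\Lambda$ (e.g.\ the suspension of a full $2$-shift) the set $\Lambda$ is locally a Cantor set times an interval, hence not locally connected, while the filtrating neighborhood is a compact $3$-manifold with boundary; no such retraction can exist. Fortunately you do not need this. By the very definition of the Lyapunov graph, each point of $L$ is the image of a single connected component of a level set $g^{-1}(c)$, so every fiber $h^{-1}(p)$ is connected; since $M$ is a manifold, it is path-connected. Likewise, the paper itself records that for a vertex $v$ the set $h^{-1}(\text{small neighborhood of }v)$ is the filtrating neighborhood, which is taken to be the \emph{connected} component of $g^{-1}(y_0-\epsilon,y_0+\epsilon)$ containing the basic set. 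Thus the path-connectedness you need is immediate from definitions, and the concatenation argument goes through: take a trajectory arc across each edge-collar $\Sigma_e\times[\epsilon,1-\epsilon]$, then join endpoints inside the (path-connected) filtrating neighborhoods. With this correction your proof is complete.
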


Throughout this paper, if $V$ is a manifold, $mV$ means the
connected sum of $m$ copies of $V$. The following theorem
(\cite{Yu2}) tells us some relation between the topology of
3-manifolds and the weights of edges of Lyapunov graphs of NS flows:
\begin{theorem}\label{theorem2.6}
Suppose a closed orientable 3-manifold M admits
      an NS flow $\phi_{t}$ with Lyapunov graph $L$. Then M admits an NS flow which
      has a regular level set homeomorphic to
 $(n+1)T^{2}$  $(n\in \mathbb{Z}, n\geq 0)$
      if and only if $M=M'\sharp n S^{1}\times S^{2}$. Here
      $M'$ is any closed orientable 3-manifold. Furthermore, $\beta_{1}(L)\geq n$.
\end{theorem}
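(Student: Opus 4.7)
My plan is to prove the two implications of the equivalence together with the bound $\beta_1(L)\geq n$.

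For the $(\Leftarrow)$ direction, I would first observe that every closed orientable 3-manifold $M'$ admits an NS flow having a single $T^2$ as a regular level set, built for instance by taking a Heegaard splitting of $M'$ and absorbing the nontrivial handle structure into SSFT saddle basic sets so that a single torus appears between the source and sink halves. Given such a flow on $M'$, I would then perform $n$ connected-sum-along-arc surgeries: each surgery removes a small tube around a flow arc of $M'$ (joining, say, a source orbit to a sink orbit) and glues in the corresponding tube-complement of $S^1\times S^2$ equipped with its standard NS flow (one source orbit, one sink orbit, one $T^2$ level set), matching flow directions along the $S^2$ boundary. Each surgery simultaneously realizes an extra $S^1\times S^2$ summand on the manifold side and contributes one extra $T^2$ component to the regular level set. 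After $n$ iterations, the construction yields an NS flow on $M=M'\sharp n(S^1\times S^2)$ with a regular level set of the required form $(n+1)T^2$.

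For the $(\Rightarrow)$ direction, let $\Sigma\cong(n+1)T^2$ be the regular level set and cut $M$ along $\Sigma$ to obtain $M=N_-\cup_\Sigma N_+$, with the NS flow transverse to $\partial N_\pm$. The Poincar\'e--Hopf conditions of Theorem~\ref{theorem2.41} force each source or sink filtrating neighbourhood in $N_\pm$ to have boundary a disjoint union of tori, and each saddle SSFT neighbourhood to preserve $\chi=0$ across it, which is why every component of any regular level set of the flow is itself a torus. Using the B\'eguin--Bonatti framework for filtrating neighbourhoods, I would reconstruct each $N_\pm$ as an assembly of solid tori (at closed-orbit sources or sinks) glued via Seifert-fibred pieces (saddle SSFT neighbourhoods). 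Each of the $n+1$ boundary tori of $\partial N_\pm$ compresses along a meridian disk in its adjacent source or sink solid torus, becoming a 2-sphere in $M$; a careful accounting of separating vs.\ non-separating spheres shows that the prime decomposition of $M$ contains exactly $n$ copies of $S^1\times S^2$, so $M=M'\sharp n(S^1\times S^2)$ for some closed orientable $M'$. For $\beta_1(L)\geq n$, note that the $n+1$ edges of $L$ corresponding to the components of $\Sigma$ are precisely the edges whose removal separates $L$ into the Lyapunov graphs $L_\pm$ of the flows on $N_\pm$; a direct count using $\beta_1(L)=E(L)-V(L)+1$, together with $E(L)=E(L_-)+E(L_+)+(n+1)$ and $V(L)=V(L_-)+V(L_+)$, yields $\beta_1(L)=\beta_1(L_-)+\beta_1(L_+)+n\geq n$.

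The main obstacle is the topological bookkeeping in the $(\Rightarrow)$ direction: one must verify that each boundary torus of $\partial N_\pm$ really does compress to a 2-sphere in $M$ and that the resulting $n+1$ spheres produce exactly $n$ (rather than fewer or more) copies of $S^1\times S^2$ in the prime decomposition. The Poincar\'e--Hopf conditions and $\chi$-conservation pin down the local structure of each filtrating neighbourhood, but globalising this local information to extract the precise count of $S^1\times S^2$ summands, and matching it with the graph-theoretic count that gives $\beta_1(L)\geq n$, is the technical heart of the argument.
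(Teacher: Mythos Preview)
The paper does not actually prove Theorem~\ref{theorem2.6}; it is quoted from the author's earlier paper \cite{Yu2}, so there is no ``paper's own proof'' to compare against here. That said, your proposal rests on a misreading of the statement that makes the argument break down entirely.

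In this paper the convention (stated just before Theorem~\ref{theorem2.6}) is that $mV$ denotes the \emph{connected sum} of $m$ copies of $V$. Hence $(n+1)T^2$ is a single connected closed orientable surface of genus $n+1$, not a disjoint union of $n+1$ tori. A ``regular level set'' in this paper always means a connected component of a level set of the Lyapunov function (this is how the weights on edges of $L$ are defined, and how the term is used throughout Section~\ref{section4}). Your entire $(\Rightarrow)$ argument is built on cutting along $n+1$ disjoint tori, compressing each to a $2$-sphere, and counting $S^1\times S^2$ summands from those spheres; none of this applies to a single genus-$(n+1)$ surface. Likewise, your Euler-characteristic claim that ``every component of any regular level set of the flow is itself a torus'' is false in general (Theorem~\ref{theorem5.2} of the paper exhibits NS flows on $S^1\times S^2$ with regular level sets $S^2$ and $2T^2$), and your $\beta_1$ count hinges on removing $n+1$ edges from $L$, whereas the hypothesis only gives you a single edge of weight $n+1$.

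Even setting the notation aside, your cut $M=N_-\cup_\Sigma N_+$ presupposes that $\Sigma$ separates $M$; a regular level set need not separate (cf.\ the inseparable tori in Theorem~\ref{theorem5.1}), so the decomposition you start from is not guaranteed. The correct approach in \cite{Yu2} has to work with a single high-genus surface and extract the $S^1\times S^2$ summands from compressions of \emph{that} surface; your proposal would need to be rebuilt from scratch with the right interpretation of $(n+1)T^2$.
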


\section{Homology and Smale flows}\label{section3}
\begin{definition}
If $\phi_t: M\rightarrow M$ is a flow with hyperbolic chain
recurrent set and its basic sets are $\{\Lambda_{i}\}$ ($i=1,...,
n$), then  a \emph{filtration} associated with $\phi_t$ is a
collection of submanifolds $M_0 \subset M_1 \subset ... \subset M_n
=M$ such that
\begin{enumerate}
\item $\phi_{t}(M_i) \subset \interior M_i$, for any $t>0$;
\item $\Lambda_i = \cap_{t=-\infty}^{\infty} \phi_{t}(M_i -
M_{i-1})$.
\end{enumerate}
\end{definition}

The following theorem is due to Bowen and J. Franks \cite{F2}:

\begin{theorem}\label{theorem3.2}
Let $\phi_t$ be a Smale flow and $M_{i}, i=1,...,n$ be a filtration
associated with $\phi_t$. Suppose $\Lambda_i
=\cap_{t=-\infty}^{\infty} \phi_{t}(M_i - M_{i-1})$ is an index $u$
basic set labeled with a matrix $A_{n\times n}$, then:
\begin{equation}\label{2}
 \begin{split}
    &H_{k}(M_{i},M_{i-1},F_2)\cong 0, \text{\ if}~  k\neq u, u+1;\\
    &H_{u}(M_{i},M_{i-1},F_2)\cong F_{2}^{n}/(I-B)F_{2}^{n};\\
    &H_{u+1}(M_{i},M_{i-1},F_2)\cong \ker ((I-B)~ on~ F_{2}^{n}).
 \end{split}\nonumber
\end{equation}
\end{theorem}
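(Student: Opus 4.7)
The plan is to compute $H_*(M_i, M_{i-1}; F_2)$ by constructing an explicit CW model for the pair $(M_i, M_{i-1})$ whose cellular chain complex is governed by the matrix $A$ of the symbolic dynamics on $\Lambda_i$. By excision, $H_*(M_i, M_{i-1}; F_2) \cong H_*(N, N^-; F_2)$, where $N = \overline{M_i \setminus M_{i-1}}$ is a filtrating neighborhood of $\Lambda_i$ and $N^- \subset \partial N$ is the subset of points whose forward orbit under $\phi_t$ immediately leaves $N$. The hyperbolicity of $\Lambda_i$ provides the local product structure needed to describe the pair $(N,N^-)$ combinatorially.

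First, I would choose a Markov partition $\{R_1,\ldots,R_n\}$ of a cross-section $\Sigma$ transverse to $\Lambda_i$, so that the first-return map $f:\Sigma \to \Sigma$ has transition matrix $A = (a_{ij})$. Each rectangle decomposes as $R_j = D^u_j \times D^s_j$, where $D^u_j$ is a $u$-disk in the transverse unstable direction and $D^s_j$ is a $(2-u)$-disk in the transverse stable direction; the flow box $B_j = R_j \times [0, \tau_j]$ obtained by flowing $R_j$ until its first return is attached to $B_i$ whenever $a_{ji}=1$. The stable boundaries of the $R_j$, together with the portions of $\partial B_j$ swept out by $D^s_j$, sit inside $N^-$.

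Next, I would construct a deformation retraction of the pair $(N, N^-)$ onto a CW pair whose relative $u$-cells are the unstable disks $D^u_j$ (obtained from the $R_j$ after collapsing the stable direction into $N^-$) and whose relative $(u+1)$-cells are the flow boxes $B_j$. The cellular boundary of a $(u+1)$-cell $B_j$ is computed by tracking the flow: the top face of $B_j$ maps onto those disks $D^u_i$ with $a_{ji}=1$, while its bottom face is the disk $D^u_j$ itself, so modulo $2$ the relative boundary is the $F_2$-linear map $F_2^n \to F_2^n$ represented by $I - B$ (up to a transpose, which is immaterial for computing kernel and cokernel as abstract $F_2$-vector spaces). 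Consequently the cellular chain complex reduces to
\begin{equation}
 0 \longrightarrow F_2^n \xrightarrow{\; I - B \;} F_2^n \longrightarrow 0 \nonumber
\end{equation}
concentrated in degrees $u+1$ and $u$, and the three claimed isomorphisms follow at once from taking kernel and cokernel.

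The main obstacle is making the deformation retraction $(N, N^-) \simeq (\text{CW model})$ rigorous. The difficulty is that $\Lambda_i$ itself is typically a nontrivial continuum whose transverse stable slices are Cantor-like, so one cannot simply collapse naive tubular neighborhoods. What makes the argument work is the canonical structure of filtrating neighborhoods of one-dimensional hyperbolic basic sets (the building blocks of J. Franks, refined by B\'eguin and Bonatti), together with the fact that each $R_j$ retracts onto $D^u_j$ along the stable lamination while keeping $D^s_j$ inside $N^-$ throughout the retraction. Working over $F_2$ is essential, since it sidesteps the orientability issues in the unstable lamination of $\Lambda_i$ that would otherwise prevent us from identifying the boundary operator cleanly with $I - B$; this is why the statement is restricted to $F_2$-coefficients rather than $\mathbb{Z}$.
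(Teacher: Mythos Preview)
The paper does not prove Theorem~\ref{theorem3.2}; it is stated without proof and attributed to Bowen and Franks (the citation is to Franks' CBMS monograph \emph{Homology and Dynamical Systems}). So there is no in-paper argument to compare against.

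That said, your sketch is essentially the standard argument one finds in Franks' monograph: excise down to an isolating block $(N,N^-)$, use a Markov partition to build a CW model whose relative cells in degrees $u$ and $u+1$ are indexed by the rectangles, and read off the boundary map as $I-B$ over $F_2$. Two small cautions. First, your dimension count ``$D^s_j$ is a $(2-u)$-disk'' presumes the ambient manifold is $3$-dimensional; the theorem as stated (and as Franks proves it) is not restricted to $3$-manifolds, so if you want the general statement you should phrase the transverse stable disk as having dimension $\dim M - 1 - u$. Second, the honest way around the ``main obstacle'' you flag is not a direct deformation retraction of $(N,N^-)$ onto a finite complex, but rather the identification of $H_*(N,N^-)$ with the shifted homology of the suspension of the subshift: one first shows $H_*(N,N^-)\cong \widetilde{H}_{*-u}(N/N^-)$ via the local product structure, and then computes the homology of the suspension of $\sigma(A)$ as the mapping torus of the shift, whose chain complex is exactly $F_2^n \xrightarrow{I-B} F_2^n$. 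Your flow-box picture is morally the same thing, but routing it through the suspension makes the retraction step routine rather than delicate.
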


Let $M$ be a closed orientable 3-manifold, $\phi_t$ be a Smale flow
on $M$ and $g$ be a Lyapunov function associated with $\phi_t$.
Assume that $c\in \mathbb{R}$ is a singular value and $g^{-1}(c)$ is
associated to a basic set labeled with a matrix $B_{n\times n}$. Set
$X=g^{-1}((-\infty, c+\epsilon])$, $Y=g^{-1}([c+\epsilon,+\infty))$
and $Z=g^{-1}((-\infty,c-\epsilon])$. Throughout this paper $k=\dim
\ker(I-B)$. The homology coefficients in this paper shall be taken
in $F_2 =\mathbb{Z} /2$.

\begin{proposition}\label{proposition3.3}

\begin{enumerate}
\item $e^+ \leq k+1+\dim H_{2}(Z) + \dim H_{2}(Y)$;
\item $e^- \leq k+1+\dim H_{2}(Z) + \dim H_{2}(Y)$;
\item $k \leq e^+ -1+\dim H_{1}(M)+ \dim H_{1}(Z)-\dim
    H_{2}(Z)$;
\item $k \leq e^- -1+\dim H_{1}(M)+ \dim H_{1}(Y)-\dim
    H_{2}(Y)$.
\end{enumerate}
\end{proposition}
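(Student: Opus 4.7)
My plan is to combine three standard tools: the Mayer--Vietoris sequence for $M=X\cup Y$ (with $X\cap Y=\partial X$), the long exact sequence of $(X,Z)$ whose relative homology is given by Theorem~\ref{theorem3.2}, and Poincar\'e duality on $M$ with $F_2$-coefficients. A preliminary observation is that over $F_2$, $\dim(F_2^n/(I-B)F_2^n)=n-\mathrm{rank}(I-B)=\dim\ker(I-B)=k$, so both nontrivial relative groups $H_u(X,Z)$ and $H_{u+1}(X,Z)$ have dimension $k$. I shall present the argument in the saddle case $u=1$, which contains all the substance; the sink/source cases are similar or trivial.

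The heart of the argument is a Mayer--Vietoris injectivity observation. Since $\partial X$ consists of $e^+$ closed orientable surfaces $\Sigma_i$, the connecting map $H_3(M)\to H_2(\partial X)=F_2^{e^+}$ sends the fundamental class $[M]$ to the nonzero element $\sum_i[\Sigma_i]$ and is therefore injective. Thus $\dim\operatorname{image}\bigl(H_2(\partial X)\to H_2(X)\oplus H_2(Y)\bigr)=e^+-1$, which yields
\[
e^+-1\;\le\;\dim H_2(X)+\dim H_2(Y)\;\le\;e^+-1+\dim H_2(M).
\]
Meanwhile, the segment $0\to H_2(Z)\to H_2(X)\to F_2^k\xrightarrow{\partial}H_1(Z)$ of the long exact sequence of $(X,Z)$ gives
\[
\dim H_2(X)=\dim H_2(Z)+k-\alpha,\qquad \alpha:=\dim\operatorname{image}(\partial)\;\le\;\min(k,\dim H_1(Z)).
\]

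Substituting this expression for $\dim H_2(X)$ into the left-hand Mayer--Vietoris bound immediately yields inequality (1). For inequality (3), I substitute into the right-hand bound, drop the nonnegative term $\dim H_2(Y)$, replace $\alpha$ by its upper bound $\dim H_1(Z)$, and invoke Poincar\'e duality $\dim H_2(M)=\dim H_1(M)$, obtaining $k\le e^+-1+\dim H_1(M)+\dim H_1(Z)-\dim H_2(Z)$. Inequalities (2) and (4) then follow by applying (1) and (3) to the time-reversed flow: its basic set is labeled by $B^T$, preserving $k$ over $F_2$, while the roles of $e^+,e^-$ and of the sub- and superlevel sets around $c$ are interchanged. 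The step I expect to require the most care is the Mayer--Vietoris injectivity claim, which relies crucially on $M$ being closed orientable (so $H_3(M;F_2)\neq 0$) and on each component of $\partial X$ being a closed orientable surface whose fundamental class is nonzero in $H_2$; this is a point one must track carefully in the presence of multiple basic sets sharing Lyapunov values or cycles in the graph.
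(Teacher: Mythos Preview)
Your argument is correct and, for (1) and (2), essentially identical to the paper's: both combine the Mayer--Vietoris sequence for $M=X\cup Y$ with the long exact sequence of $(X,Z)$, whose relative groups are supplied by Theorem~\ref{theorem3.2}. The only genuine difference is in the proof of (3) (and hence (4)). To bound $\dim H_2(X)$ above by $e^+-1+\dim H_1(M)$, the paper introduces a second pair: it invokes Lefschetz duality $H_1(M,Y)\cong H_2(X)$ and reads off the bound from the reduced long exact sequence of $(M,Y)$, where the term $e^+-1$ appears as $\dim\widetilde H_0(Y)$. You instead remain within the Mayer--Vietoris sequence already set up, extracting the companion inequality $\dim H_2(X)+\dim H_2(Y)\le e^+-1+\dim H_2(M)$ from exactness at $H_2(X)\oplus H_2(Y)$, dropping $\dim H_2(Y)\ge 0$, and converting $H_2(M)$ to $H_1(M)$ by Poincar\'e duality on the closed manifold. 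Your route is a bit more economical---one long exact sequence does double duty and only the simplest form of duality is needed---while the paper's isolates the bound on $\dim H_2(X)$ by itself; the two dualities are of course closely related. Your closing remark about tracking components of $\partial X$ is apt: both arguments tacitly use that $g^{-1}(c+\epsilon)$ has exactly $e^+$ components.
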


\begin{proof}
We consider the Mayer-Vietoris exact sequence:
\begin{equation}\label{3}
\begin{split}
&H_{3}(X)\oplus H_{3}(Y)\rightarrow H_{3}(X\cup Y)\rightarrow
H_{2}(X\cap Y)\xrightarrow{d_*} H_{2}(X)\oplus
H_{2}(Y)\\
&\xrightarrow{c_*} H_{2}(X\cup Y)
\end{split}
\end{equation}
and the exact sequence for $(X,Z)$ :
\begin{equation}\label{4}
H_{3}(X,Z)\rightarrow H_{2}(Z)\rightarrow H_{2}(X)\xrightarrow{b_*}
H_{2}(X,Z)\rightarrow H_{1}(Z)\rightarrow H_{1}(X).
\end{equation}

Both $X$ and $Y$ are compact three manifolds with boundary, so
$H_{3}(X)\cong H_{3}(Y)\cong 0$. $X\cup Y = M$, so $H_{3}(X\cup Y)
\cong F_2$. $X\cap Y$ is composed of $e^+$ closed orientable
surfaces, so $H_{2}(X\cap Y)\cong e^{+} F_2$.  Since (\ref{3}) is an
exact sequence, $\dim \imag d_* =\dim \ker c_*\leq \dim H_{2}(X)+
\dim H_{2}(Y)$. It implies that $\dim \imag d_* =e^+ -1$. Therefore,
we have $e^+ -1\leq \dim H_{2}(X)+ \dim H_{2}(Y)$. By the exact
sequence (\ref{4}), we have $\dim H_{2}(X) \leq \dim H_{2}(Z) + \dim
H_{2}(X,Z)$. By Theorem \ref{theorem3.2}, we have $\dim H_{2}(X,Z)=
\dim \ker(I-B)=k$. Therefore, (1) in Proposition
\ref{proposition3.3} is proved. (2) in Proposition
\ref{proposition3.3} can be proved similarly.

By Theorem \ref{theorem3.2}, we have $H_{3}(X,Z)=0$. Therefore, by
the exact sequence (\ref{4}),
\begin{equation}\label{5}
k=\dim H_{2}(X,Z)\leq \dim H_{2}(X)+\dim H_{1}(Z)-\dim H_{2}(Z).
\end{equation}
 Now we consider the reduced exact sequence for $(M,Y)$:
\begin{equation}\label{6}
\widetilde{H_{1}}(Y)\rightarrow \widetilde{H_{1}}(M)\rightarrow
\widetilde{H_{1}}(M,Y)\rightarrow \widetilde{H_{0}}(Y) \rightarrow
\widetilde{H_{0}}(M).
\end{equation}
Since $\widetilde{H_{0}}(M)=0$ and  $\dim \widetilde{H_{0}}(Y)=e^+
-1$, by the exact sequence (\ref{6}), we have $\dim H_{1}(M,Y)\leq
e^+ -1+ \dim H_{1}(M)$. On the other hand, by Lefschetz duality,
$H_{1}(M,Y)\cong H_{2}(M-Y,M-M)=H_{2}(X)$. Therefore, $\dim
H_{2}(X)\leq e^+ -1+ \dim H_{1}(M)$. By (\ref{5}), $k\leq e^+ -1+
\dim H_{1}(M)+\dim H_{1}(Z)-\dim H_{2}(Z)$. Therefore, (3) in
Proposition \ref{proposition3.3} is proved. (4) in Proposition
\ref{proposition3.3} can be proved similarly.
\end{proof}

\begin{lemma}\label{lemma3.4}
Let $K$ be a knot in $S^3$ and $K^c$ be the complement space of $K$
in $S^3$, then $H_{1}(K^c)\cong F_2$ and $H_{2}(K^c)\cong 0$.
\end{lemma}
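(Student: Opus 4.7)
The plan is to apply Alexander duality with $F_2$ coefficients, which is the standard tool for computing the homology of a complement in a sphere. Since a knot $K \subset S^3$ is homeomorphic to $S^1$, its reduced cohomology with $F_2$ coefficients satisfies $\widetilde{H}^0(K; F_2) = 0$ and $\widetilde{H}^1(K; F_2) \cong F_2$. Alexander duality then gives
$$\widetilde{H}_i(S^3 \setminus K; F_2) \cong \widetilde{H}^{2-i}(K; F_2),$$
and setting $i = 1$ and $i = 2$ yields $\widetilde{H}_1(K^c) \cong F_2$ and $\widetilde{H}_2(K^c) = 0$. Since $K^c$ is connected, reduced and unreduced homology coincide in positive degrees, which is the statement of the lemma.

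If one prefers to stay within the Mayer--Vietoris framework already used in the paper, an alternative route is to replace $K^c$ by the homotopy-equivalent compact knot exterior $X = S^3 \setminus \interior N(K)$, where $N(K)$ is a closed tubular neighborhood of $K$. The decomposition $S^3 = X \cup N(K)$ with intersection $\partial N(K) \cong T^2$ produces a Mayer--Vietoris sequence; plugging in the $F_2$-homologies of $T^2$, $N(K) \simeq S^1$, and $S^3$ determines $H_1(X; F_2)$ and $H_2(X; F_2)$ by exactness. The one nontrivial input is that the connecting homomorphism $H_3(S^3; F_2) \to H_2(T^2; F_2)$ is an isomorphism of one-dimensional $F_2$-spaces, which follows because the fundamental class of $S^3$ restricts nontrivially to $\partial N(K)$ viewed as a Heegaard torus. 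I expect no real obstacle; the lemma is a standard exercise and either route is essentially routine bookkeeping.
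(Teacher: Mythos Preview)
Your proposal is correct. The paper's own proof is in the same spirit as your primary argument but uses a slightly different packaging: it invokes Lefschetz duality to get $H_1(K^c)\cong H_2(S^3,K)$ and $H_2(K^c)\cong H_1(S^3,K)$, and then reads off $H_*(S^3,K;F_2)$ from the long exact sequence of the pair $(S^3,K)$. Your Alexander duality argument collapses these two steps into one, since Alexander duality is precisely Lefschetz duality for $(S^n,K)$ combined with the exact sequence of the pair; so the two proofs are really the same idea, with yours a bit more streamlined. Your Mayer--Vietoris alternative via the exterior $X=S^3\setminus\interior N(K)$ is a genuinely different decomposition from what the paper does, and it works as well; the only delicate point you flag (injectivity of $H_3(S^3)\to H_2(T^2)$) is indeed easily handled, for instance by noting that otherwise $H_2(X)\oplus H_2(N(K))$ would surject onto $H_2(T^2)\cong F_2$, forcing $H_2(X)\neq 0$, which then contradicts the rest of the sequence. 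Either route is fine for this elementary lemma.
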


\begin{proof}
By Lefschez duality theorem, we have $H_{1}(K^c)\cong H_{2}(S^3,K)$
and $H_{2}(K^c)\cong H_{1}(S^3,K)$. By the exact sequence:
$H_{2}(S^3)\rightarrow H_{2}(S^3,K)\rightarrow H_{1}(K)\rightarrow
H_{1}(S^3)\rightarrow H_{1}(S^3,K)$, it is easy to show that
$H_{2}(S^3,K)\cong F_2$ and $H_{1}(S^3,K)\cong 0$. Therefore,
$H_{1}(K^c)\cong F_2$ and $H_{2}(K^c)\cong 0$.
\end{proof}

\begin{corollary}\label{corollary3.5}
If each component of $Y$ and $Z$ is homeomorphic to a knot
complement, then $e^{+}\leq k+1$, $e^{-}\leq k+1$ and $k+1\leq
e^{+}+e^{-}+\dim H_{1}(M)$.
\end{corollary}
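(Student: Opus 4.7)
The plan is to combine Lemma \ref{lemma3.4} with Proposition \ref{proposition3.3} after first computing $\dim H_{1}$ and $\dim H_{2}$ of $Y$ and $Z$ in terms of $e^{+}$ and $e^{-}$.

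The key preliminary observation is that a knot complement in $S^{3}$ is a compact $3$-manifold whose boundary is a single torus. Since $\partial Y = X \cap Y = g^{-1}(c+\epsilon)$ has $e^{+}$ components and each component of $Y$ contributes exactly one torus boundary, the number of components of $Y$ must equal $e^{+}$. By the same reasoning applied to $\partial Z = g^{-1}(c-\epsilon)$, the number of components of $Z$ equals $e^{-}$. Invoking Lemma \ref{lemma3.4} componentwise then gives
\begin{equation}
\dim H_{1}(Y) = e^{+}, \quad \dim H_{2}(Y) = 0, \quad \dim H_{1}(Z) = e^{-}, \quad \dim H_{2}(Z) = 0. \nonumber
\end{equation}

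Next I would substitute these values directly into Proposition \ref{proposition3.3}. Parts (1) and (2) immediately collapse to $e^{+} \leq k+1$ and $e^{-} \leq k+1$, since the $\dim H_{2}$ terms vanish. Part (3) becomes $k \leq e^{+} - 1 + \dim H_{1}(M) + e^{-}$, which rearranges to the desired inequality $k+1 \leq e^{+} + e^{-} + \dim H_{1}(M)$. (Part (4) yields the same inequality, so it is redundant here.)

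There is no real obstacle; the only subtlety is being careful about the componentwise bookkeeping, namely that the single-torus-boundary property of knot complements forces the component counts of $Y$ and $Z$ to match the edge counts $e^{+}$ and $e^{-}$ exactly. Once this identification is made, the three claims drop out of Proposition \ref{proposition3.3} with no further work.
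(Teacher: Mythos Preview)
Your proof is correct and follows exactly the route the paper intends: the paper's own proof simply says ``It follows easily from Proposition~\ref{proposition3.3} and Lemma~\ref{lemma3.4},'' and you have filled in precisely those details. The only content you add beyond the paper is the explicit component count (that $Y$ has $e^{+}$ components and $Z$ has $e^{-}$ components because each knot complement contributes a single torus boundary), which is indeed the one point worth spelling out.
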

\begin{proof}
It follows easily from Proposition \ref{proposition3.3} and Lemma
\ref{lemma3.4}.
\end{proof}

\section{Some Properties on Lyapunov graphs of NS flows on $S^1 \times S^2$}\label{section4}

\begin{proposition}\label{proposition4.1}
Let $i:T^2 \hookrightarrow S^1 \times S^2$ be an embedding map and
$i_{*}: \pi_{1}(T^2)\rightarrow \pi_{1}(S^1 \times S^2)$ be the
homomorphism induced by $i$. One and only one of the following three
possibilities occurs.
\begin{enumerate}
\item If $i(T^2)$ is inseparable in $S^1 \times S^2$, then $S^1 \times S^2- i(T^2)\cong (O\sqcup
K)^{c}$. Here $(O\sqcup K)^{c}$ is the compliment of a
two-component-link in $S^3$, where $O$ is a trivial knot and $K$ can
be any knot unlinked with $O$.

\item If $i(T^2)$ is separable in $S^1 \times S^2$ and $\imag
i_{*}=0$, then $$S^1 \times S^2- i(T^2)\cong K^{c} \sqcup (S^1
\times D^2)\sharp (S^1 \times D^2)$$ or $$S^1 \times S^2-
i(T^2)\cong (S^1 \times D^2) \sqcup (S^1 \times D^2)\sharp K^{c}.$$
Here $S^1 \times D^2$ is an open solid torus and $K$ can be any
knot.

\item If $i(T^2)$ is separable in $S^1 \times S^2$ and $\imag
i_{*}\neq 0$, then $i(T^2)$ bounds a solid torus in $S^1 \times
S^2$.
\end{enumerate}
\end{proposition}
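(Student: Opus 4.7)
The plan is to compress $T=i(T^2)$ to a 2-sphere and then exploit the primeness of $S^1\times S^2$, distinguishing the three cases by whether $T$ separates and by the image of $i_*$.

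\textbf{Compression.} Since $\pi_1(T^2)=\mathbb{Z}^2$ does not embed in $\pi_1(S^1\times S^2)=\mathbb{Z}$, the map $i_*$ has nontrivial kernel, and the Loop Theorem produces a compressing disk $D$ on one side of $T$. Its boundary $c=\partial D$ is essential on $T$, hence non-separating on $T^2$, so compressing $T$ along $D$ produces a single 2-sphere $S\subset S^1\times S^2$. Since the compression is a surface surgery, $[S]=[T]$ in $H_2(S^1\times S^2;\mathbb{Z}/2)$, and in particular $S$ separates iff $T$ does. By primeness of $S^1\times S^2$, a separating sphere bounds a 3-ball, while a non-separating sphere is isotopic to $\{*\}\times S^2$.

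\textbf{Case (3).} Assume $T$ separates, write $S^1\times S^2=A\cup_T B$ with $D\subset A$, and let $N(D)\subset A$ be a tubular neighborhood of $D$. Then $S=\partial(A|D)$ splits $S^1\times S^2$ into $A|D$ and $B\cup N(D)$, exactly one of which is the 3-ball bounded by $S$. If $A|D$ is the ball, then $A=(A|D)\cup N(D)$ is a 3-ball with a 1-handle attached, i.e.\ a solid torus, and $T=\partial A$ bounds it. If instead $B\cup N(D)$ is the ball, then capping its sphere boundary by a further 3-ball recovers $S^3=B\cup(\text{solid torus})$, exhibiting $B$ as the exterior of some knot $K\subset S^3$ with $c$ a meridian of $K$. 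In this second subcase $c=\partial D$ is null-homotopic in $A$, and the longitude $\lambda$ of $K$ is trivial in $H_1(B)=\mathbb{Z}\langle c\rangle$, so every loop on $T$ dies in $H_1(S^1\times S^2)=\mathbb{Z}$, forcing $\imag i_*=0$. Thus the second subcase cannot occur under the hypothesis $\imag i_*\ne 0$, and $T$ must bound a solid torus, proving (3).

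\textbf{Cases (1) and (2).} In the separating case with $\imag i_*=0$, both subcases above are compatible with the hypothesis, and each places one side of $T$ as a trivially embedded solid torus (sitting in a 3-ball of $S^1\times S^2$) and the other side as the exterior of a knot $K$ inside that ball; the two options in the statement of (2) correspond to which side contains the compressing disk. For the non-separating case, $S$ is isotopic to $\{*\}\times S^2$, and cutting $S^1\times S^2$ along $S$ yields $S^2\times[0,1]\cong S^3\setminus(B_0\sqcup B_1)$ for two disjoint open balls. The compression recipe turns $T$ into a properly embedded annulus in $S^2\times[0,1]$ whose two boundary circles, tracking the two pushed-off copies of $D$, lie on opposite boundary spheres. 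Capping $B_0,B_1$ to restore $S^3$, the core of this annulus becomes a circle that can be isotoped into a ball and is therefore the unknot $O$, while any residual braiding of the annulus inside the cylinder is recorded by a second knot $K$ contained in a ball disjoint from a neighborhood of $O$ --- hence unlinked with $O$. Tracing through the identifications gives $S^1\times S^2-T\cong (O\sqcup K)^c$.

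\textbf{Main obstacle.} Case (3), which is the statement asked for, follows cleanly from the compression/primeness dichotomy once one notices that the ``knot exterior'' subcase is precisely the one ruled out by the homological hypothesis $\imag i_*\ne 0$. The substantive geometric work lies in the identification of the complements in Cases (1) and (2); the hardest step in Case (1) will be verifying that the isotopy class of the compression annulus in $S^2\times[0,1]$ is flexible enough to realize every knot $K$ unlinked with $O$, and that no nontrivial linking with $O$ can appear.
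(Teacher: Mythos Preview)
Your overall strategy---compress $T$ to a sphere via the Loop Theorem and then invoke primeness of $S^1\times S^2$---is exactly the paper's, and your argument for Case~(3) is correct and in fact more carefully justified than the paper's (which simply asserts that ``$c$ is not in the 3-ball'' without the homological check you supply).

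Your sketches of Cases~(1) and~(2), however, contain real errors, not just missing detail.  In Case~(2) you assert that each subcase ``places one side of $T$ as a trivially embedded solid torus \dots\ and the other side as the exterior of a knot $K$ inside that ball.''  That is impossible: if both sides of $T$ sat inside a single ball their union would be that ball, not $S^1\times S^2$.  In the subcase where $A|D$ is the ball and $A$ is a solid torus, the complementary piece $B$ is the exterior of that solid torus in $S^1\times S^2$, namely $(S^1\times D^2)\sharp K^c$, not a bare knot complement; in the other subcase $B\cong K^c$ but then $A\cong (S^1\times D^2)\sharp(S^1\times D^2)$.  The extra $S^1\times D^2$ summand carries the ambient topology, and the two displayed options in the proposition are genuinely different, not the symmetric picture you describe.

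In Case~(1) your claim that the annulus $A(c)$ becomes properly embedded in $S^2\times[0,1]$ with its two boundary circles on \emph{opposite} boundary spheres is wrong.  Near each boundary circle $c_\pm$, the annulus $A(c)\subset T$ approaches $S_0$ from the side \emph{opposite} to the interior of $N(D)$, and it does so at \emph{both} ends; hence after cutting along $S_0$ both feet of the tube lie on the same boundary sphere.  The correct picture (which the paper indicates only by a figure) is that $T$ is $\{*\}\times S^2$ tubed along an arc $\alpha$ with both endpoints on the same side; capping the \emph{other} boundary sphere of the cut manifold with a ball produces $S^3$, the co-core of that cap becomes the unknot $O$, and the arc $\alpha$ closes up to a knot $K$ lying in a ball disjoint from $O$.  (Also: ``$K$ can be any knot'' in the statement only means $K$ is unconstrained as the embedding varies; you need not realize every $K$ for a fixed $i$.)
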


\begin{figure}[htp]
\begin{center}
  \includegraphics[totalheight=3.1cm]{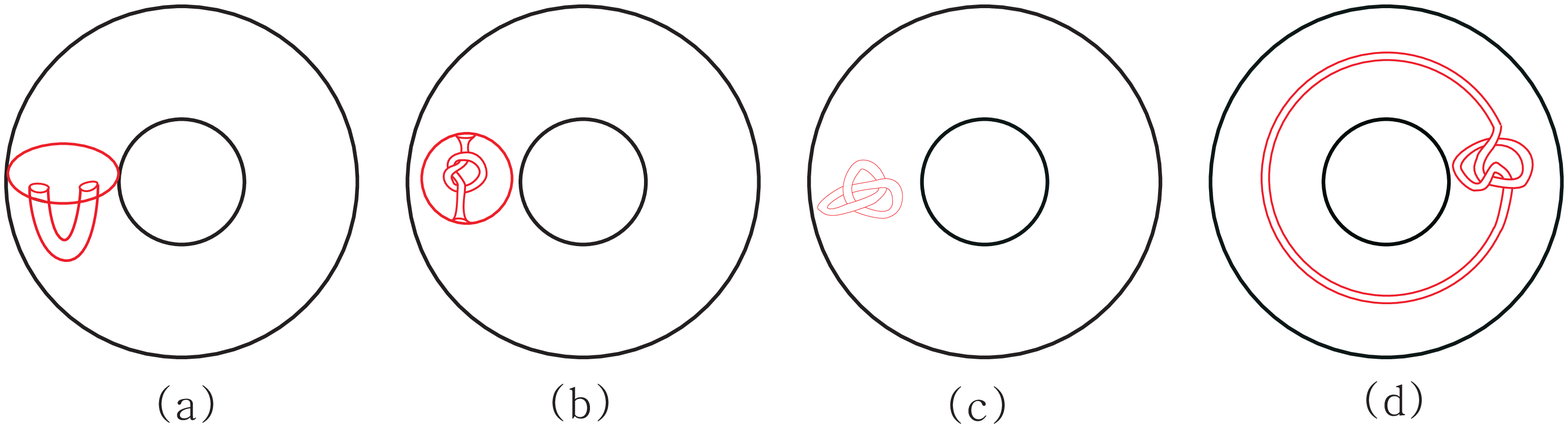}\\
  \caption{}\label{figure1}
  \end{center}
\end{figure}

\begin{proof}
Since $\pi_1 (T^2)\cong 2\mathbb{Z}$ and $\pi_1 (S^1 \times
S^2)\cong \mathbb{Z}$, $\ker i_* \neq0$.  Therefore,  by Dehn's
lemma (see [He]), there exists a nontrivial simple closed curve
$c\subset i(T^2)$ such that $c$ bounds a disk $D$ in $S^1 \times
S^2- i(T^2)$. Cutting $i(T^2)$ along a ribbon neighborhood $N(c)$ of
$c$ and pasting two copies of $D$ to $\partial (i(T^2)-N(c))$, we
obtain a sphere $S_{0}^{2}$. One and only one of the following three
possibilities occurs.
\begin{enumerate}
\item If $i(T^2)$ is inseparable in $S^1 \times S^2$, then $S_{0}^{2}$
is also inseparable in $S^1 \times S^2$. Therefore, $S_{0}^{2}$ is
isotopic to $\{pt\}\times S^2$ where $pt$ is a point in $S^1$. So it
is easy to show that $S^1 \times S^2- i(T^2)\cong (O\sqcup K)^{c}$.
This case is illustrated by  Figure \ref{figure1} (a).
\item If $i(T^2)$ is separable in $S^1 \times S^2$ and $\imag
i_{*}=0$, then $S_{0}^{2}$ bounds a 3-ball in $S^1 \times S^2$.
Moreover,
\begin{enumerate}
\item if $c$ is in the 3-ball, then $S^1 \times S^2- i(T^2)\cong
K^{c} \sqcup (S^1 \times D^2)\sharp (S^1 \times D^2)$ (see Figure
\ref{figure1} (b));
\item if $c$ is not in the 3-ball, then $S^1 \times S^2- i(T^2)\cong (S^1 \times D^2) \sqcup (S^1 \times D^2)\sharp K^{c}$ (see Figure
\ref{figure1} (c)).
\end{enumerate}
\item If $i(T^2)$ is separable in $S^1 \times S^2$ and $\imag
i_{*}\neq 0$, then $S_{0}^{2}$ bounds a 3-ball in $S^1 \times S^2$
and $c$ is not in the 3-ball. Therefore, $i(T^2)$ bounds a solid
torus in $S^1 \times S^2$, as shown in Figure \ref{figure1} (d).
\end{enumerate}
\end{proof}

From the proposition above, it is easy to show the following
corollary.

\begin{corollary}\label{corollary4.2}
Let $i:T^2 \hookrightarrow S^1 \times S^2$ be an embedding map such
that $i(T^2)$ is separable in $S^1 \times S^2$, then $i(T^2)$ bounds
a knot complement in $S^1 \times S^2-i(T^2)$.
\end{corollary}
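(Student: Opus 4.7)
The plan is to deduce the corollary by a direct case analysis from Proposition \ref{proposition4.1}. The hypothesis that $i(T^2)$ is separable rules out case (1) of the proposition, so exactly one of case (2) or case (3) must hold, and I would verify in each of these two situations that at least one of the two connected components of $S^1 \times S^2 - i(T^2)$ is homeomorphic to a knot complement $K^c \subset S^3$.

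In case (2), the conclusion of Proposition \ref{proposition4.1} explicitly lists one of the two connected components of $S^1 \times S^2 - i(T^2)$ as $K^c$ for some knot $K \subset S^3$ (the other component being a connected sum of open solid tori). So the required knot complement is built into the statement of case (2) and there is nothing further to check.

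In case (3), $i(T^2)$ bounds a solid torus $V$ in $S^1\times S^2$. The key observation is that the interior of a solid torus, $\interior V \cong S^1 \times \interior D^2$, is itself a knot complement, namely the complement $O^c$ of the unknot $O \subset S^3$. Hence on the $V$-side, $i(T^2)$ bounds a knot complement, as desired.

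I do not foresee any substantial obstacle here, as the topological content is already packaged in Proposition \ref{proposition4.1}. The only point that needs to be made explicit is that the open solid torus appearing in case (3) counts as a knot complement, since it is the complement of the trivial knot; without this remark the corollary would look stronger than its actual proof.
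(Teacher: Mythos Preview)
Your proposal is correct and is precisely the argument the paper has in mind: the paper gives no explicit proof, stating only that the corollary follows easily from Proposition~\ref{proposition4.1}, and your case analysis (ruling out case~(1) by separability, reading off $K^c$ directly in case~(2), and recognizing the solid torus in case~(3) as the unknot complement) is exactly that deduction.
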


The following lemma is Theorem 4.7 in \cite{Re1}. It is proved by
using Poincar\'e-Hopf formula.

\begin{lemma}\label{lemma4.3}
Suppose $\phi_{t}$ is a smooth flow on an odd-dimensional manifold
$M$ which transverses outside to $\partial M^{-}$ and transverses
inside to $\partial M^{+}$, where $\partial M=\partial M^{+} \cup
\partial M^{-}$. Then $\sum I_{p}=\frac{1}{2}(X(\partial
M^{+})-X(\partial M^{-}))$ where the summation is taken over all
singularities in $M$, $I_{p}$ is the index of the singularity $p$
and $X$ denotes the Euler characteristic.
\end{lemma}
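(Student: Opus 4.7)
The plan is to combine two classical facts: the doubling identity for the Euler characteristic of an odd-dimensional manifold with boundary, together with a boundary version of the Poincar\'e-Hopf theorem. Let $V$ denote the smooth vector field generating $\phi_t$. Since $\phi_t$ is transverse to $\partial M$, $V$ is nonzero on $\partial M$, so the singularities of $\phi_t$ are exactly the isolated zeros of $V$ in $\interior M$. In the paper's convention the flow enters $M$ through $\partial M^{-}$ and exits through $\partial M^{+}$, i.e.\ $V$ points inward on $\partial M^{-}$ and outward on $\partial M^{+}$.

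For any odd-dimensional compact manifold $N$ with boundary, the identity $X(N) = \tfrac{1}{2}X(\partial N)$ holds: the double $DN = N \cup_{\partial N} N$ is a closed odd-dimensional manifold, so $X(DN)=0$, and the additivity relation $X(DN) = 2X(N) - X(\partial N)$ gives the identity. Applied to $M$,
\[
X(M) = \tfrac{1}{2}\bigl(X(\partial M^{+}) + X(\partial M^{-})\bigr).
\]

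Next, I would invoke the Morse--Pugh boundary Poincar\'e-Hopf formula: for a smooth vector field on a compact manifold with boundary, transverse to the boundary, the sum of the interior indices equals $X(M) - X(\partial_{\mathrm{in}} M)$, where $\partial_{\mathrm{in}} M$ denotes the inward-pointing boundary component. In our setting $\partial_{\mathrm{in}} M = \partial M^{-}$, so
\[
\sum_p I_p = X(M) - X(\partial M^{-}).
\]
To prove this, attach an external collar $\partial M^{-} \times [0,1]$ to $M$ along $\partial M^{-}$ and extend $V$ on the collar as $f(s)\,\partial_s + Y$, where $Y$ is a Morse vector field on $\partial M^{-}$ and $f:[0,1]\to\mathbb{R}$ satisfies $f(0)<0$, $f(1)>0$ with a single transverse zero at some $s_0 \in (0,1)$. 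A block-Jacobian calculation shows that the new zeros all lie on the cross-section $\partial M^{-}\times\{s_0\}$ and have indices matching those of $Y$, so they contribute $\sum_p I_p(Y) = X(\partial M^{-})$ in total (by Poincar\'e-Hopf on the closed manifold $\partial M^{-}$). On the enlarged manifold the extended field is outward-transverse on the full boundary, so the classical outward-boundary Poincar\'e-Hopf applies, and the total index equals the Euler characteristic of the enlargement, which equals $X(M)$ by homotopy equivalence. Subtracting the collar contribution yields the displayed formula.

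Substituting the first identity into the second gives
\[
\sum_p I_p = \tfrac{1}{2}\bigl(X(\partial M^{+}) + X(\partial M^{-})\bigr) - X(\partial M^{-}) = \tfrac{1}{2}\bigl(X(\partial M^{+}) - X(\partial M^{-})\bigr),
\]
which is the desired identity. The main technical point is the collar construction: one must ensure smoothness of the extension, isolatedness of the new zeros with the correct indices, and outward-transversality of the new boundary. The doubling identity and the final algebra are then routine.
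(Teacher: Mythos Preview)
The paper does not supply its own proof of this lemma: it is simply quoted as Theorem~4.7 of de~Rezende~\cite{Re1}, with the one-line remark that it ``is proved by using Poincar\'e--Hopf formula.'' Your argument---the doubling identity $X(M)=\tfrac12 X(\partial M)$ for odd-dimensional $M$, combined with the boundary version of Poincar\'e--Hopf obtained via a collar extension---is exactly such a proof, and it is correct. So there is nothing to compare against beyond noting that your write-up fleshes out what the paper merely cites.

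One point on conventions is worth flagging. The paper's phrasing ``transverses outside to $\partial M^{-}$'' means the flow \emph{exits} through $\partial M^{-}$ and \emph{enters} through $\partial M^{+}$; this is confirmed by how the lemma is used to recover the Poincar\'e--Hopf condition in Theorem~\ref{theorem2.41}, where $e^{+}$ counts incoming edges and the identity reads $0=e^{+}-e^{-}-\sum g_{j}^{+}+\sum g_{i}^{-}$. You have read the convention the other way around. Your proof remains internally consistent and lands on the stated identity, because reversing the roles of $\partial M^{\pm}$ is exactly compensated, in odd dimension, by the sign difference between the Poincar\'e--Hopf index of $V$ and that of $-V$; but you should align your reading of $\partial M^{\pm}$ and of $I_{p}$ with the paper's before using the lemma downstream.
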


\begin{proposition}\label{proposition4.3}
Let $\phi_t$ be an NS flow on $S^{1}\times S^{2}$ with Lyapunov
graph $L$.
\begin{enumerate}
\item If each regular level set of $(\phi_t,L)$ is separable, then each regular level set of $(\phi_t,L)$ is homeomorphic to a torus and
$L$ is a tree.
\item If at least one of the regular level sets of $(\phi_t,L)$ is inseparable,
 then there are exactly the following two possibilities.
\begin{enumerate}
\item If $\Sigma$ is homeomorphic to $T^2$, then each regular level
set of $(\phi_t,L)$ is homeomorphic to $T^2$. Moreover,
$\beta_{1}(L)=1$.
\item If $\Sigma$ is not homeomorphic to $T^2$, then
$\Sigma$ is homeomorphic to $S^2$ or $2T^2$ and $\beta_{1}(L)=1$.
Furthermore, there exist at least one regular level set which is
homeomorphic to $S^2$ and at least one regular level set which is
homeomorphic to $2T^2$.
\end{enumerate}
\end{enumerate}
\end{proposition}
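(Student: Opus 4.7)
The plan is to combine Theorem \ref{theorem2.5}, Theorem \ref{theorem2.6}, and the Poincar\'e-Hopf identity (Lemma \ref{lemma4.3}) applied to submanifolds obtained by cutting $M$ along regular level sets. I first note that an edge $E$ of $L$ is a graph-theoretic bridge if and only if $\Sigma_E$ separates $M$, so ``all regular level sets separable'' is equivalent to $L$ being a tree, i.e.\ $\beta_1(L) = 0$, while ``some level set is inseparable'' means $L$ has a cycle, which combined with Theorem \ref{theorem2.5} and $\beta_1(S^1 \times S^2) = 1$ forces $\beta_1(L) = 1$ exactly. This already gives the first Betti number assertions in both cases.

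Next I use Theorem \ref{theorem2.6} to constrain which connected closed surfaces can appear as a regular level set: since the prime decomposition of $S^1 \times S^2$ contains a single $S^1 \times S^2$ summand, any level set of positive genus $g$ must have $g \in \{1,2\}$, so every edge of $L$ carries a level set homeomorphic to $S^2$, $T^2$, or $2T^2$. For any bridge $E$, cutting $M$ along $\Sigma_E$ produces two submanifolds, one of which, say $M'$, has $\partial M' = \Sigma_E$ with the flow transverse to the boundary in one direction, so either $\partial^{+}M' = \emptyset$ or $\partial^{-}M' = \emptyset$. Applying Lemma \ref{lemma4.3} with $\sum I_p = 0$ for a nonsingular flow forces $\chi(\Sigma_E) = 0$, hence $\Sigma_E = T^2$. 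This completes Case (1).

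For Case (2) I fix a non-separating edge $E_0$ with $\Sigma = \Sigma_{E_0}$ and cut $M$ along $\Sigma$ to obtain a connected $N$ with $\partial N = \Sigma^A \sqcup \Sigma^B$, where $\Sigma^A \subset \partial^{-} N$ and $\Sigma^B \subset \partial^{+} N$. For any other edge $F$ of the cycle (equivalently, any edge on the unique path in the tree $L - E_0$ joining the two endpoints of $E_0$), I cut $N$ further at $\Sigma_F$ into $N_1 \supset \Sigma^A$ and $N_2 \supset \Sigma^B$ and apply Lemma \ref{lemma4.3} to $N_1$. A short case analysis on the direction of $F$ relative to the path gives $\chi(\Sigma_F) = -\chi(\Sigma)$ when the flow along $F$ points from the $N_1$-side to the $N_2$-side, and $\chi(\Sigma_F) = \chi(\Sigma)$ otherwise. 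If $\chi(\Sigma) = 0$ both possibilities force $\chi(\Sigma_F) = 0$, so every cycle edge is a torus; combined with the bridge case this proves (2a). If $\chi(\Sigma) = \pm 2$, both possibilities put $\Sigma_F$ into $\{S^2, 2T^2\}$, confirming $\Sigma \in \{S^2, 2T^2\}$; to produce a level set of the opposite topological type I invoke condition (1) in the Lyapunov-graph definition: if every path edge fell into the second case above, the concatenation of the path with $E_0$ would form an oriented cycle in $L$, a contradiction. Hence some path edge $F$ is of the first type, so $\chi(\Sigma_F) = -\chi(\Sigma)$, giving a regular level set of Euler characteristic opposite to $\Sigma$, which is the claimed surface of the remaining type.

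The main bookkeeping hurdle is honest tracking of flow orientations: identifying which copy of $\Sigma$ lies in $\partial^{+} N$ versus $\partial^{-} N$, and then translating the direction of each cycle edge (relative to a chosen traversal of the path from one endpoint of $E_0$ to the other) into whether $\Sigma_F$ contributes to $\partial^{+} N_1$ or $\partial^{-} N_1$. Once these signs are unambiguous, the Euler-characteristic identity of Lemma \ref{lemma4.3} is routine, and the acyclicity axiom for $L$ supplies the combinatorial input that drives the ``both $S^2$ and $2T^2$ occur'' conclusion in (2b).
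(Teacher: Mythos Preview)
Your argument is correct but takes a genuinely different route from the paper in parts (2a) and (2b). For (2a) the paper does not compute Euler characteristics along the cycle directly; instead it uses Proposition~\ref{proposition4.1} to identify $S^1\times S^2-\Sigma$ with a two-component link complement $(O\sqcup K)^c$ in $S^3$, caps off with a round $0$-handle and a round $2$-handle to manufacture an NS flow on $S^3$, and then quotes Theorem~\ref{theorem2.4} to force every level set to be a torus. Your cut-and-recut application of Lemma~\ref{lemma4.3} avoids both that surgery and the appeal to Theorem~\ref{theorem2.4}, which is a cleaner self-contained route. For the ``both $S^2$ and $2T^2$ occur'' clause in (2b) the paper is instead shorter than you: it takes the regular value $x$ with $\Sigma\subset g^{-1}(x)$, notes that $g^{-1}(x)=\Sigma_1\sqcup\cdots\sqcup\Sigma_s$ is the full boundary of $g^{-1}((-\infty,x])$ with the flow transverse in a single direction, and applies Lemma~\ref{lemma4.3} once to obtain $\sum_i\chi(\Sigma_i)=0$. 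Since each $\chi(\Sigma_i)\in\{2,0,-2\}$ and one of the summands is $\chi(\Sigma)=\pm 2$, a component with the opposite Euler characteristic must be present. This global sum bypasses your edge-by-edge orientation bookkeeping and, in particular, never invokes the no-oriented-cycle axiom; on the other hand, your argument extracts finer information, namely that the sign change already occurs along the cycle $C$ and is governed by the edge orientations there, which is exactly what feeds into condition~(1b) of Theorem~\ref{theorem5.2}.
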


\begin{proof}
Let $\Sigma$ be a regular level set of $(\phi_t,L)$.

Suppose each regular level set of $(\phi_t,L)$ is separable. For a
regular level set $\Sigma$, $S^1 \times S^2 = M_{1}\cup_{\Sigma}
M_{2}$ where $M_1$ and $M_2$ are two compact 3-manifolds with
boundaries. By Lemma \ref{lemma4.3}, we have $X(\Sigma)=0$.
Therefore, $\Sigma$ is homeomorphic to a torus. If $L$ isn't a tree,
then there exists a cycle in $L$. The inverse image of a regular
point in the cycle is a inseparable regular level set in $S^1 \times
S^2$. It contradicts our assumption. Therefore, (1) in Proposition
\ref{proposition4.3} is proved.

 Now we suppose there exists a regular level set $\Sigma$ of $(\phi_t,L)$ which is
inseparable.

If $\Sigma \cong T^2$, by Proposition \ref{proposition4.1}, $N=S^1
\times S^2 - \Sigma \cong (O\cup K)^{c}$ where $O$ is a trivial knot
and $K$ is an arbitrary knot unlinked with $O$. Attaching a standard
solid torus neighborhood of a closed orbit attractor (round
0-handle) and a standard solid torus neighborhood of a closed orbit
repeller (round 2-handle) to $(N, \phi_t |N)$ suitably, we obtain an
NS flow $\psi_t$ on $S^3$. By Theorem \ref{theorem2.4}, each regular
level set of $\psi_t$ is homeomorphic to a torus. Therefore, each
regular level set of $\phi_t$ is homeomorphic to $T^2$. Since there
exists an inseparable regular level set,  it is obvious that
$\beta_{1}(L)\geq 1$. On the other hand, by Theorem
\ref{theorem2.5}, $\beta_{1}(L)\leq 1$. Therefore, $\beta_{1}(L)=1$.
(a) of (2) in Proposition \ref{proposition4.3} is proved.

If $\Sigma$ is not homeomorphic to a torus,  by a similar argument,
we can show that $\beta_{1}(L)=1$. By Theorem \ref{theorem2.6}, we
have $\Sigma \cong S^2$ or $\Sigma \cong 2T^2$. Let $g:M\rightarrow
R$ be a Lyapunov function associated with $L$ and $x\in \mathbb{R}$
be a regular value. Suppose $h:M\rightarrow L$ is a map such that
$g=\pi \circ h$. Here $\pi: L\rightarrow R$ is the natural
projection. Suppose $g^{-1}(x)=\Sigma_1 \sqcup \Sigma_2 \sqcup ...
\sqcup \Sigma_s$. By Lemma \ref{lemma4.3}, $\sum_{i=1}^{s}
X(\Sigma_i)=0$. Since for any $i\in \{1,...,s\}$, $\Sigma_i$ is
homeomorphic to one of $S^2$, $T^2$ and  $2T^2$, it is easy to show
that there exist at least one regular level set which is
homeomorphic to $S^2$ and at least one regular level set which is
homeomorphic to $2T^2$. Hence (b) of (2) in Proposition
\ref{proposition4.3} is proved.
\end{proof}

\begin{remark}
The result in Proposition \ref{proposition4.3} (1) is still true if
we generalize $S^1 \times S^2$ to any other closed 3-manifold $M$.
The proof is similar. Indeed, if we change $S^1 \times S^2$ to any
other closed 3-manifold $M=M' \sharp n S^{1} \times S^{2}$ where
$M'$ is prime to $S^1 \times S^2$, by Theorem \ref{theorem2.6}, each
regular level set of $(\phi_t,L)$ is homeomorphic to one of
$S^2,T^2,...,(n+1)T^2$.
\end{remark}

\begin{proposition} \label{proposition4.5}
Let $\phi_t$ be an NS flow on $S^{1}\times S^{2}$ with Lyapunov
graph $L$ such that each regular level set of $\phi_t$ is
homeomorphic to a torus.
\begin{enumerate}
\item If there exists an inseparable regular level set, then
for any basic set of $\phi_t$, $e^+ \leq k+1$, $e^- \leq k+1$ and
$k+1\leq e^+ + e^-$.
\item  If each regular level set of $\phi_t$ is separable, then
for any basic set of $\phi_t$, $e^+ \leq k+1$, $e^- \leq k+1$ and
$k\leq e^+ + e^-$. Furthermore, there exists at most one basic set
of $\phi_t$ which satisfies that $k= e^+ + e^-$.
\end{enumerate}
\end{proposition}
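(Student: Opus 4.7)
The two parts of Proposition~\ref{proposition4.5} require different strategies, which I outline separately.

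For Part~(1), my plan is to convert $\phi_t$ into an NS flow on $S^3$ by surgery along the inseparable torus and then invoke Franks' classification. Given the inseparable regular level torus $T_0$, I would cut $S^1\times S^2$ along $T_0$ and cap each of the two resulting torus boundary components with a solid torus carrying a standard NMS flow consisting of a single closed-orbit attractor or repeller, exactly as in the proof of Proposition~\ref{proposition4.3}(2)(a). The outcome is an NS flow $\psi_t$ on $S^3$ whose Lyapunov graph is $L$ with its unique cycle broken and two new leaves attached; in particular, it is a tree. Every basic set of $\phi_t$ persists as a basic set of $\psi_t$ with unchanged $k$, $e^+$, and $e^-$. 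Applying Theorem~\ref{theorem2.4} to $\psi_t$ at the given basic set then immediately yields $e^+\leq k+1$, $e^-\leq k+1$, and $k+1\leq e^++e^-$.

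For Part~(2), my plan has three steps. First, by Proposition~\ref{proposition4.3}(1), $L$ is a tree, so for every basic set each connected component of $Y$ and of $Z$ has exactly one torus boundary component. The key technical step is to show that every such component is homeomorphic to a knot complement in $S^3$. Since each boundary torus is separating in $S^1\times S^2$, Corollary~\ref{corollary4.2} guarantees that one side of it is a knot complement; what remains is to verify that it is the $Y$-side (respectively the $Z$-side). I would argue this by induction on the number of vertices in the subtree of $L$ above (respectively below) the chosen edge. The base case is a neighborhood of a closed-orbit source or sink, which is a solid torus, i.e.\ a trivial knot complement. The inductive step glues a filtrating neighborhood of a basic set along tori to a disjoint union of knot complements, and one must confirm that the result embeds in $S^3$ as a knot complement.

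With the knot-complement claim in hand, Corollary~\ref{corollary3.5} applies and, combined with $\dim H_1(S^1\times S^2)=1$, yields $e^+\leq k+1$, $e^-\leq k+1$, and $k\leq e^++e^-$. For the uniqueness of a basic set with $k=e^++e^-$, I would revisit the exact sequences in the proof of Proposition~\ref{proposition3.3}: the equality $k=e^++e^-$ forces tightness at every step of that proof at the chosen vertex, thereby absorbing the full unit of homological slack coming from $\dim H_1(M)=1$. If two distinct basic sets both achieved this equality, each would demand its own unit of slack, but Theorem~\ref{theorem2.5} together with $\beta_1(L)=0$ (since $L$ is a tree) caps the total available slack at $\beta_1(S^1\times S^2)-\beta_1(L)=1$, yielding a contradiction.

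The main obstacle will be the inductive claim in Step~1 of Part~(2). Attaching a filtrating neighborhood of an arbitrary hyperbolic $1$-dimensional basic set to a collection of knot complements along their tori need not, a priori, produce a knot complement in $S^3$; establishing it requires an analysis of the embeddings of filtrating neighborhoods in $S^3$ that goes beyond the combinatorial data $(k,e^+,e^-)$ and will lean on the separability hypothesis in an essential way. A secondary, more technical issue is the bookkeeping in the uniqueness argument, where the precise relationship between equality in $k\leq e^++e^-$ at a single basic set and the global Betti number $\beta_1(S^1\times S^2)$ must be made rigorous.
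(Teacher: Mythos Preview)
Your argument for Part~(1) is correct and is exactly the paper's approach.

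For Part~(2), however, your inductive claim is not merely hard---it is false, and this is the whole point of the proposition. The filtrating neighborhood $N$ constructed in Lemma~\ref{lemma5.4} (or equivalently the one appearing in the sufficiency proof of Theorem~\ref{theorem5.5}) has $k=e^{+}+e^{-}$ and is \emph{not} a link complement in $S^{3}$; if it were, the surgery-to-$S^{3}$ argument would force $k+1\le e^{+}+e^{-}$. Hence for any other vertex $v$, a component of $Y$ or $Z$ that happens to contain this $N$ will fail your knot-complement hypothesis. Your own caveat at the end identifies the obstruction, but it cannot be overcome: the separability hypothesis does not prevent such a piece from occurring.

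The paper sidesteps this by a dichotomy rather than an induction. For a fixed basic set, either every component of $Y$ and $Z$ is a knot complement---then Corollary~\ref{corollary3.5} gives $e^{\pm}\le k+1$ and $k\le e^{+}+e^{-}$---or some component, say $Y_{1}$, is not. In that case Corollary~\ref{corollary4.2} guarantees that the \emph{other} side, $\overline{S^{1}\times S^{2}\setminus Y_{1}}$, is a knot complement, so one caps it with a solid torus and runs your Part~(1) surgery to land in $S^{3}$, obtaining the stronger bound $k+1\le e^{+}+e^{-}$. The uniqueness statement then falls out directly: cut $S^{1}\times S^{2}$ along one torus per edge into pieces $N_{1},\dots,N_{m}$; since each cutting torus has a knot complement on one side, at most one $N_{i}$ can fail to be a link complement, and every $N_{j}$ that \emph{is} a link complement satisfies $k+1\le e^{+}+e^{-}$ by the $S^{3}$ argument. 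This replaces your ``homological slack'' bookkeeping with a purely topological pigeonhole.
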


\begin{proof}
If there exists an inseparable regular level set $\Sigma$ in $S^1
\times S^2$, by Proposition \ref{proposition4.1}, $N=S^1 \times S^2
- \Sigma \cong (O\cup K)^{c}$ where $O$ is a trivial knot and $K$ is
a knot unlinked with $O$. Attaching a round 0-handle  and a round
2-handle to $(N, \phi_t |N)$ suitably, we obtain an NS flow $\psi_t$
on $S^3$. By Theorem \ref{theorem2.4}, each basic set of $\psi_t$
satisfies that $e^+ \leq k+1$, $e^- \leq k+1$ and $k+1\leq e^+ +
e^-$. Therefore, for any basic set of $\phi_t$, $e^+ \leq k+1$, $e^-
\leq k+1$ and $k+1\leq e^+ + e^-$.

If each regular level set of $\phi_t$ is separable, by Proposition
\ref{proposition4.3}, each regular level set is homeomorphic to a
torus and $L$ is a tree. Let $\Sigma$ be a regular level set of
$\phi_t$, by Corollary \ref{corollary4.2}, $\Sigma$ bounds a knot
complement in $S^1 \times S^2 - \Sigma$. Let $g$ be a Lyapunov
function associated with $L$. Assume that $c\in \mathbb{R}$ is a
singular value and $g^{-1}(c)$ is associated with a basic set
$\Lambda$ labeled with a matrix $B_{n\times n}$. Set
$X=g^{-1}((-\infty, c+\epsilon])$, $Y=g^{-1}([c+\epsilon,+\infty))$
and $Z=g^{-1}((-\infty,c-\epsilon])$. If each component of $Y$ and
$Z$ is a knot complement, by Corollary \ref{corollary3.5}, $e^+ \leq
k+1$, $e^- \leq k+1$ and $k\leq e^+ + e^-$. Otherwise, without loss
of generality, we suppose that a component of $Y$, denoted by $Y_1$,
is not a knot complement. Therefore, $S^1 \times S^2 - Y_1$ is a
knot complement. By an argument similar to the proof of (1) in
Proposition \ref{proposition4.5}, we have $e^+ \leq k+1$, $e^- \leq
k+1$ and $k+1\leq e^+ + e^-$. In summary, $e^+ \leq k+1$, $e^- \leq
k+1$ and $k\leq e^+ + e^-$.  We choose a regular level set for each
edge of $L$ and these regular level sets divide $S^1 \times S^2$
into m manifolds with boundary, $N_1, ..., N_m$. Each $N_i$
corresponds to a basic set $\Lambda_i$. Since each regular level set
of $\phi_t$ bounds a knot complement, at most one of $N_1,...,N_m$
is not a link complement. If there is one, we assume without loss of
generality that it is $N_1$. For an $(N_j,\Lambda_j)$ ($j\neq 1$),
by an argument similar to the proof of (1) in Proposition
\ref{proposition4.5}, $k+1\leq e^+ + e^-$. Therefore, there exists
at most one basic set of $\phi_t$ which satisfies that $k= e^+ +
e^-$.
\end{proof}

\section{Main results: Lyapunov graphs of NS flows on $S^{1}\times S^{2}$}\label{section5}
In this section, we prove Theorem \ref{theorem5.1}, Theorem
\ref{theorem5.2} and Theorem \ref{theorem5.5}, which give necessary
and sufficient conditions on an abstract Lyapunov graph  to be
associated with an NS flow on $S^1 \times S^2$. The fact that we
need three theorems is due to Proposition \ref{proposition4.3}.

\begin{theorem}\label{theorem5.1}
Let $L$ be an abstract Lyapunov graph, then $L$ is associated with
an NS flow $\phi_{t}$ on $S^{1}\times S^{2}$ such that there exists
an inseparable regular level set which is homeomorphic to $T^{2}$ if
and only if the following conditions hold.
\begin{enumerate}

\item The underlying graph $L$ is an oriented graph and $\beta_{1}(L)=1$
with exactly one edge attached to each closed orbit sink or closed
orbit source vertex.  The weight of any edge of $L$ is 1.

\item If a vertex is labeled with an SSFT with matrix $A_{m\times
m}$, then
\begin{equation}\label{1}
 \begin{split}
    &0< e^{+}\leq k+1, 0< e^{-}\leq k+1 \text{\ and}\\
    &k+1\leq e^{+}+ e^{-}.
 \end{split}\nonumber
\end{equation}
\end{enumerate}
\end{theorem}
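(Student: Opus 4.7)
The plan is to handle necessity directly from the structural results of Section \ref{section4}, and to prove sufficiency by a surgery reduction to Franks' theorem (Theorem \ref{theorem2.4}) on $S^3$.

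\emph{Necessity.} Given such a flow $\phi_t$, the existence of an inseparable regular level set homeomorphic to $T^2$ places us in case (2)(a) of Proposition \ref{proposition4.3}, which yields that every regular level set is a torus (so each edge of $L$ has weight $1$) and that $\beta_1(L)=1$. The vertex inequalities $0<e^{\pm}\leq k+1$ and $k+1\leq e^++e^-$ follow verbatim from Proposition \ref{proposition4.5}(1), using that any SSFT-labelled vertex carries both incoming and outgoing flow lines (so $e^{\pm}>0$). The one-edge condition at closed-orbit sink/source vertices is built into the definition of their filtrating neighbourhoods.

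\emph{Sufficiency.} Assume $L$ satisfies (1)--(2). Since $\beta_1(L)=1$, $L$ has a unique simple cycle; pick an edge $e$ in it and cut it. Cap off the two resulting free half-edges by new closed-orbit vertices: a closed-orbit sink $v_s$ at the free half-edge emanating from $i(e)$, and a closed-orbit source $v_r$ at the free half-edge terminating at $t(e)$. The resulting graph $L'$ is a tree; its SSFT vertices inherit exactly the $(e^+, e^-, k)$ data of $L$, its edge weights remain $1$, and $v_s, v_r$ each carry a single edge. Hence $L'$ satisfies the hypotheses of Theorem \ref{theorem2.4}, and there exists an NS flow $\psi_t$ on $S^3$ realising $L'$.

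Next, one must arrange the two new closed orbits $\gamma_s, \gamma_r \subset S^3$ to form a link isotopic to $O \sqcup K$ with $O$ a trivial knot unlinked with $K$, as in Proposition \ref{proposition4.1}(1); this uses the freedom in Franks' construction to attach the round $0$- and $2$-handles corresponding to $v_s, v_r$ last, inside disjoint small $3$-balls of $S^3$, as standard unknots. Then one removes open solid-torus neighbourhoods of $\gamma_s, \gamma_r$ and identifies the two resulting boundary tori via the homeomorphism supplied by Proposition \ref{proposition4.1}(1), producing $S^1\times S^2$ with the identified torus appearing as an inseparable $T^2$. The flow $\psi_t$ restricted to $S^3 \setminus (N(\gamma_s) \cup N(\gamma_r))$ descends to an NS flow on $S^1\times S^2$ whose Lyapunov graph is obtained from $L'$ by deleting $v_s, v_r$ and identifying the two freed half-edges, which is exactly $L$.

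The main obstacle is the realisability step above: showing that within Franks' construction on $S^3$, the closed orbits at the two extreme ends of $L'$ can be chosen to form the specific split unlink configuration required by Proposition \ref{proposition4.1}(1), and that the prescribed identification of boundary tori yields $S^1\times S^2$ rather than some other Dehn-filling of $(O\sqcup K)^c$. I expect this to follow from the local nature of round $0$- and $2$-handle attachments at extreme-end vertices of a Lyapunov tree, which can be placed in arbitrarily small disjoint $3$-balls independently of the interior construction, together with a framing computation to confirm that the gluing is the one extracted from Proposition \ref{proposition4.1}.
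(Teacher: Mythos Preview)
Your proposal is correct and follows essentially the same route as the paper: necessity via Proposition \ref{proposition4.3}(2)(a) and Proposition \ref{proposition4.5}(1), and sufficiency by cutting an edge of the cycle, capping with a closed-orbit sink and source to obtain a tree $L'$, invoking Theorem \ref{theorem2.4} to realise $L'$ on $S^3$, and then regluing along the two new closed orbits. The paper handles the point you flag as the main obstacle in exactly the way you anticipate, appealing to the explicit constructions in \cite{F5} to take $K_1\sqcup K_2$ as an unlinked pair of trivial knots, and then simply asserts that the boundary tori can be glued ``suitably'' to produce $S^1\times S^2$; your version is, if anything, slightly more careful in isolating this step.
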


\begin{proof}
\emph{The necessity}. Let $\phi_t$ be an NS flow on $S^1 \times S^2$
which satisfies that there exists an inseparable regular level set
which is homeomorphic to $T^{2}$. Suppose $L$ is a Lyapunov graph of
$\phi_t$. Since there exists an inseparable regular level set which
is homeomorphic to $T^{2}$, by (a) of (2) in Proposition
\ref{proposition4.3}, we have $\beta_{1}(L)=1$ and the weight of any
edge of $L$ is 1. By (1) in Proposition \ref{proposition4.5}, we
have $0< e^+ \leq k+1$, $0< e^- \leq k+1$ and $k+1\leq e^+ + e^-$.
The noncyclic condition of NS flow implies that $L$ possesses no
oriented cycles.

\begin{figure}[htp]
\begin{center}
  \includegraphics[totalheight=4cm]{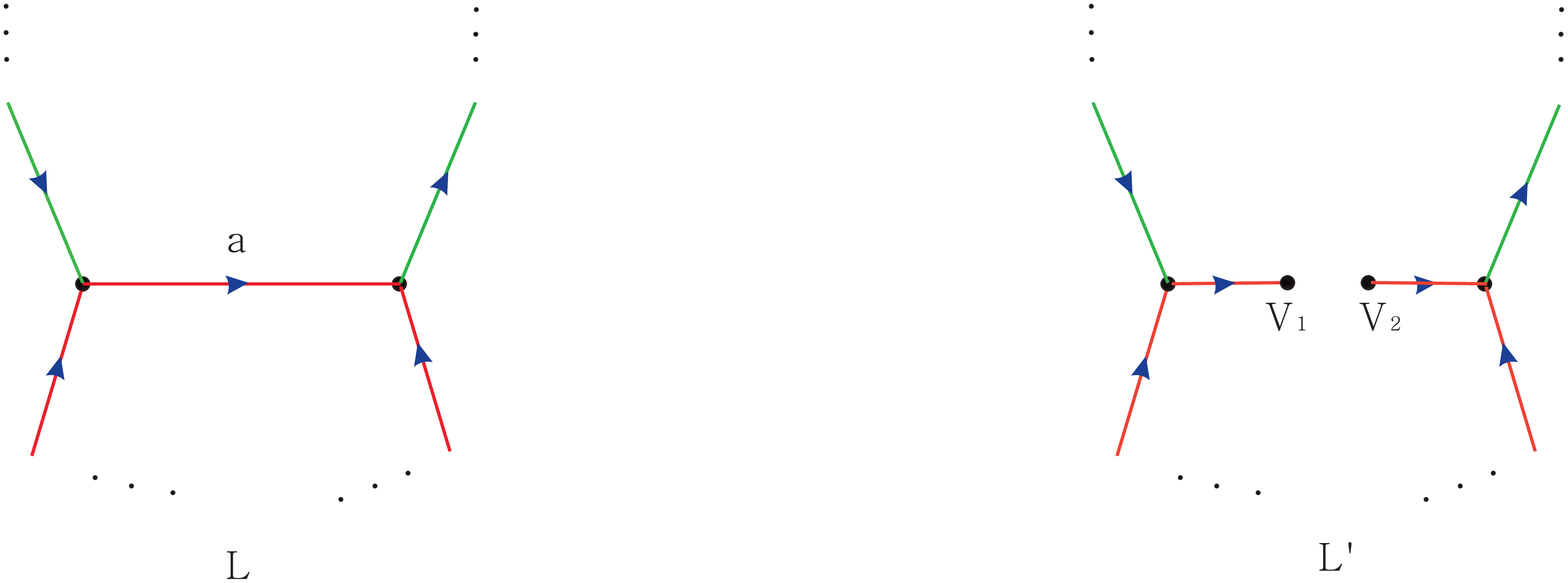}\\
  \caption{}\label{figure2}
  \end{center}
\end{figure}

\emph{The sufficiency}. Let $L$ be a Lyapunov graph which satisfies
(1) and (2) of Theorem \ref{theorem5.1}. Suppose $a$ is an edge in
$L$ such that $a$ belongs to the cycle of $L$. We can obtain a new
Lyapunov graph $L'$ by cutting $L$ along $a$, then adding two
vertices $V_1$ and $V_2$ which are labeled by a closed orbit sink
and a closed orbit source respectively, as shown in Firgure
\ref{figure2}. It is easy to show that $L'$ satisfies the sufficient
condition of Theorem \ref{theorem2.4}. Therefore, there exists an NS
flow $\psi_t$ on $S^3$ with Lyapunov graph $L'$. Suppose the sink
labeled with $V_1$ and the source labeled with $V_2$ in $\psi_t$ are
$K_1$ and $K_2$ respectively. By the constructions of NS flows in
\cite{F5}, $K_1 \sqcup K_2$ is a two-component unlinked link such
that $K_1$ and $K_2$ both are trivial knots. Cutting two standard
solid torus neighborhoods $N(K_1)$ and $N(K_2)$ of $K_1$ and $K_2$
respectively, then pasting $S^3-(N(K_1)\sqcup N(K_2))$ along
$\partial N(K_1)$ and $\partial N(K_2)$ suitably, we obtain an NS
flow $\phi_t$ on $S^1 \times S^2$ with Lyapunov graph $L$.
\end{proof}

\begin{theorem}\label{theorem5.2}
Let $L$ be an abstract Lyapunov graph, then $L$ is associated with
an NS flow $\phi_{t}$ on $S^{1}\times S^{2}$ such that there exists
an inseparable regular level set which is homeomorphic to $S^{2}$ if
and only if the following conditions hold.
\begin{enumerate}

\item The underlying graph $L$ is an oriented graph and $\beta_{1}(L)=1$
 with exactly one edge attached to each closed orbit sink
or closed orbit source vertex. Suppose the cycle $C$ of $L$ is
composed of $n$ edges. Then $C$ and $L$ satisfy the following
conditions.
\begin{enumerate}
\item The weight of an edge of $C$ is 0 or 2. Moreover, there exist at least
 one  weight 0 edge and one weight 2 edge in $C$.
\item The orientation of a weight 0 edge in $C$ is opposite to the
orientation of a weight 2 edge in $C$.
\item The weight of any other edge of $L$ is 1.
\end{enumerate}

\item If a vertex is labeled with an SSFT with matrix $A_{m\times
m}$, then
\begin{equation}\label{1}
    0< e^{+}\leq k+1, 0< e^{-}\leq k+1.
\nonumber
\end{equation}
Moreover,
\begin{enumerate}
\item if there exist a weight 0 edge starting at the vertex and a weight 0 edge terminating at the vertex, then
$k+2\leq e^+ +e^-$;
\item if there exist a weight 2 edge starting at the vertex and a weight 2 edge terminating at the vertex, then
$k\leq e^+ +e^-$;
\item otherwise, $k+1\leq e^+ +e^-$.
\end{enumerate}
\end{enumerate}
\end{theorem}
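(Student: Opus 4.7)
The plan is to imitate the two-step structure of Theorem~\ref{theorem5.1} while accounting for the richer family of regular level sets ($S^2$, $T^2$, $2T^2$) allowed in the presence of an inseparable~$S^2$.

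\emph{Necessity.} Let $\phi_t$ be an NS flow on $S^1\times S^2$ with an inseparable regular level set homeomorphic to $S^2$. Proposition~\ref{proposition4.3}(2)(b) yields $\beta_1(L)=1$ together with the coexistence of $S^2$ and $2T^2$ regular level sets. An edge of $L$ lies in the unique cycle $C$ precisely when its regular level surface is non-separating in $S^1\times S^2$. Proposition~\ref{proposition4.3}(2)(a) then rules out weight-$1$ cycle edges, while separating $S^2$ or $2T^2$ surfaces are excluded by Lemma~\ref{lemma4.3} applied to either side of the separation, using $\chi(S^2)=2\neq 0$ and $\chi(2T^2)=-2\neq 0$ together with the absence of singularities. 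This yields (1)(a) and (1)(c). The opposite-orientation claim (1)(b) follows from a vertex-by-vertex application of Lemma~\ref{lemma4.3} to the cycle vertices: the balance $\chi(\partial M^+)=\chi(\partial M^-)$ forces the two cycle edges meeting at a given vertex either to share both weight and $L$-orientation, or to pair a weight-$0$ and a weight-$2$ edge of opposite $L$-orientation, and the global alignment in (1)(b) is immediate. The inequalities in (2) come from Proposition~\ref{proposition3.3}: items (1)-(2) give the bounds $0<e^{\pm}\le k+1$, and the three-case lower bound on $e^++e^-$ emerges from items (3)-(4) by tracking, at each basic-set vertex, whether the filtration sets $Y$ and $Z$ contain a non-knot-complement component lying on the $S^2$-side of $C$ (contributing $+1$ to $\dim H_2$) or on the $2T^2$-side (contributing $+1$ to $\dim H_1-\dim H_2$).

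\emph{Sufficiency.} Given an abstract $L$ satisfying (1)-(2), pick a weight-$0$ edge $a$ and an oppositely-oriented weight-$2$ edge $b$ in $C$. Cut $L$ at both $a$ and $b$, and at each of the four dangling half-ends attach a short ``transition-plus-cap'' chain---a brief sequence of SSFT vertices and weight-$1$ auxiliary edges terminating in a closed-orbit sink or source---arranged so that the resulting tree $\tilde L$ satisfies the vertex inequalities of Theorem~\ref{theorem2.4} throughout. Franks' construction in \cite{F5} then realises $\tilde L$ as an NS flow $\psi_t$ on $S^3$ in which the four added sinks and sources appear as two unlinked pairs of trivial knots. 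Removing standard solid-torus neighborhoods of these knots and identifying the resulting torus boundaries pairwise by appropriate homeomorphisms, just as in the last step of Theorem~\ref{theorem5.1}, produces $S^1\times S^2$ together with an NS flow whose Lyapunov graph is exactly $L$.

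\emph{Main obstacle.} The bulk of the work lies in the sufficiency: one must verify that the cap-and-transition chains can always be designed so that the bound $k+1\le e^++e^-$ of Theorem~\ref{theorem2.4} holds at every vertex of $\tilde L$, including the new vertices near the cuts. This is exactly where the three-case split (2)(a)-(c) earns its keep: the stronger bound $k+2\le e^++e^-$ in case (a) provides the slack needed to absorb two weight-$0$ caps, while the weaker bound $k\le e^++e^-$ in case (b) is compensated by two weight-$2$ caps. I expect most of the remaining effort to consist of this case-by-case combinatorial bookkeeping, together with checking that the final regluing really yields $S^1\times S^2$ and not some other closed $3$-manifold.
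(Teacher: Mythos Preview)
Both halves of your argument have genuine gaps, and both are repaired by the same device you did not use: passing to a \emph{Smale} flow on $S^3$ (with singularities) via Theorem~\ref{theorem2.41}, rather than to a nonsingular one via Theorem~\ref{theorem2.4}.

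In the sufficiency direction, after you cut $L$ at one weight-$0$ edge and one weight-$2$ edge, the two remaining arcs of the cycle $C$ will in general still carry further edges of weight $0$ or $2$ (see e.g.\ Figure~\ref{figure3}, where the cycle has six edges). However you design the transition chains at the four cut ends, these interior non-unit-weight edges persist in $\tilde L$, so $\tilde L$ can never satisfy the hypothesis of Theorem~\ref{theorem2.4} that \emph{every} edge have weight~$1$. Franks' realisation theorem is therefore simply unavailable, and no amount of combinatorial bookkeeping at the caps will close this. The paper instead cuts at a \emph{single} weight-$0$ edge and caps the two $S^2$ ends with \emph{singularity} sink and source vertices; the resulting tree $L'$ is then checked against, and realised by, de~Rezende's Theorem~\ref{theorem2.41}, which permits arbitrary edge weights. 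The two singularities live in $3$-balls, and gluing along the boundary $2$-spheres yields $S^1\times S^2$ with the desired NS flow.

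In the necessity direction, your claim that Proposition~\ref{proposition3.3}(1)--(2) yield $e^\pm\le k+1$ already fails at any vertex off the cycle. For such a vertex one of $Y,Z$ contains the entire cycle and hence a non-separating embedded $S^2$; this forces $\dim H_2\ge 1$ there, so Proposition~\ref{proposition3.3}(1) gives only $e^+\le k+2$. The paper again uses the cut-and-singularity-cap: the resulting $L'$ is the Lyapunov graph of an actual Smale flow on $S^3$, and the inequalities of Theorem~\ref{theorem2.41}, read back through the single cut, give both $e^\pm\le k+1$ and the three-case lower bound (2)(a)--(c) directly.
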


 Figure
\ref{figure3} shows two examples for $n=6$.

\begin{figure}[htp]
\begin{center}
  \includegraphics[totalheight=5.3cm]{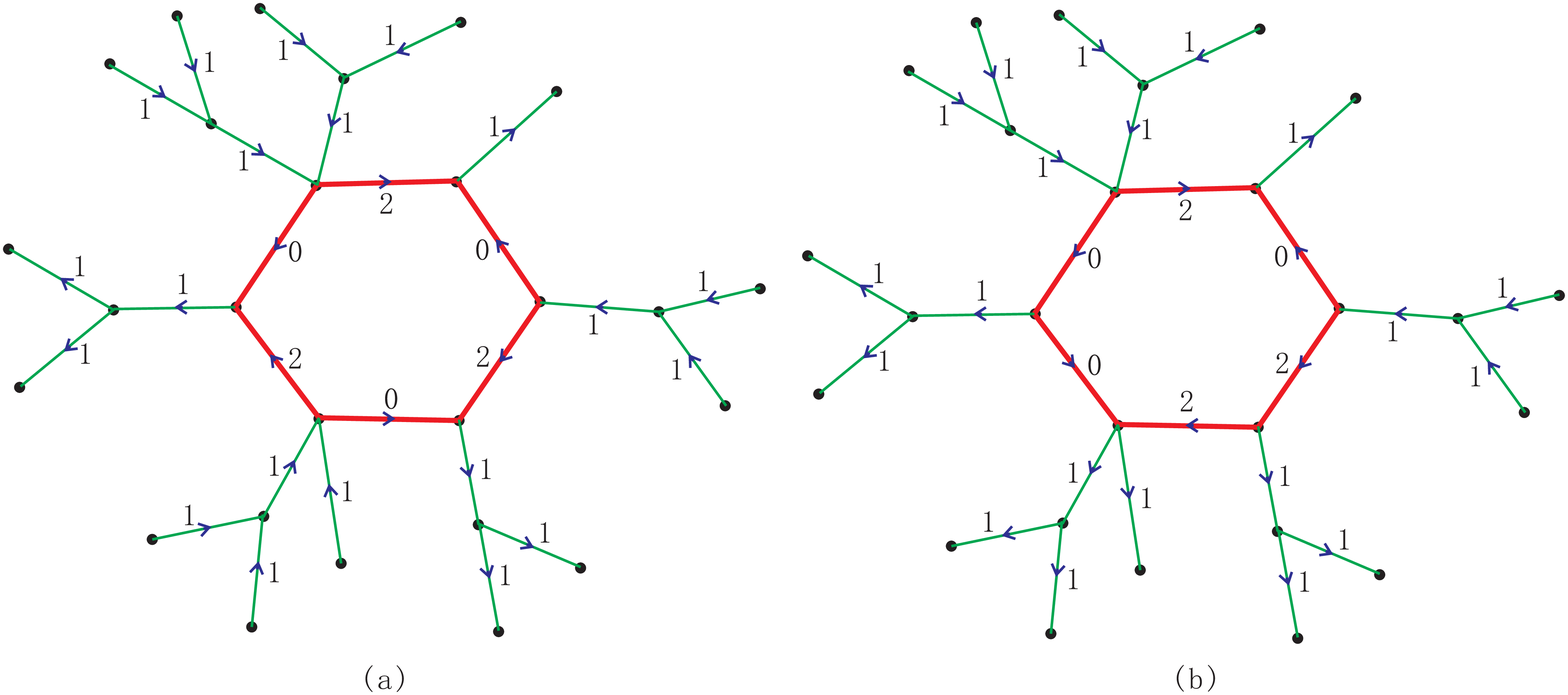}\\
  \caption{}\label{figure3}
  \end{center}
\end{figure}

\begin{proof}
\emph{The necessity}. Let $\phi_t$ be an NS flow on $S^1 \times S^2$
which satisfies that there exists an inseparable regular level set
which is homeomorphic to $S^{2}$. Suppose $L$ is a Lyapunov graph of
$\phi_t$. By (2) in Proposition \ref{proposition4.3},
$\beta_{1}(L)=1$. Therefore, there exists a unique cycle $C\subset
L$. Let $b$ be an edge in $\overline{L-C}$. By Lemma \ref{lemma4.3},
it is easy to show that the weight of $b$ is 1. Since there exists a
inseparable regular level set which is homeomorphic to $S^{2}$, by
(b) in Proposition \ref{proposition4.3}, the weight of an edge of
$C$ is 0 or 2 and there exist at least
 one  weight 0 edge and one weight 2 edge in $C$. Let  $a$ and $b$ be two
 adjacent edges in $C$. By Lemma \ref{lemma4.3} and Proposition \ref{proposition4.3}, there are three
 cases:
 \begin{enumerate}
 \item $a$ and $b$ are both weight 0 edges. Moreover, the orientations of $a$ and
 $b$ in $C$ are the same.
 \item $a$ and $b$ are both weight 2 edges. Moreover, the orientations of $a$ and
 $b$ in $C$ are the same.
 \item The weight of $a$ is 0 (2, resp.) and the weight of $b$ is 2 (0, resp.).  Moreover, the orientation of $a$
 in $C$ is opposite to the orientation of $b$ in $C$.
 \end{enumerate}
 It follows easily from the above observations that the orientation of an weight 0 edge
 in $C$ is opposite to the orientation of an weight 2 edge in $C$.

Let $V$ be a vertex in $L$ labeled with an SSFT with matrix
$A_{m\times m}$. Suppose $a$ is a weight 0 edge in $L$. We can
obtain a new Lyapunov graph $L'$ by cutting $L$ along $a$, then
pasting a singularity sink vertex $V_1$ and a singularity source
vertex $V_2$, as shown in Figure \ref{figure2}. Then $L'$ is a
Lyapunov graph of a Smale flow on $S^3$. By Theorem
\ref{theorem2.41},
  $0< \max \{e^+, e^-\} \leq k+1$, $k+1\leq \sum_{i=1}^{e^{-}}g_{i}^{-} + e^{+}$ and
$k+1\leq\sum_{j=1}^{e^{+}}g_{j}^{+} + e^{-}$. Therefore, by some
easy computations, $V$ satisfies (2) in Theorem \ref{theorem5.2}.

 \emph{The sufficiency}.
  Let $L$ be a Lyapunov graph which satisfies
(1) and (2) of Theorem \ref{theorem5.2}. Suppose $a$ is a weight 0
edge in $L$. We can obtain a new Lyapunov graph $L'$ by cutting $L$
along $a$, then pasting a singularity sink vertex $V_1$ and a
singularity source vertex $V_2$, as shown in Figure \ref{figure2}.
It is easy to show that $L'$ satisfies the sufficient condition of
Theorem \ref{theorem2.41}. Therefore, there exists a Smale flow
$\psi_t$ on $S^3$ with Lyapunov graph $L'$. Suppose the singularity
sink labeled with $V_1$ and the singularity source labeled with
$V_2$  are $s_1$ and $s_2$ respectively. Cutting two standard 3-ball
neighborhoods $N(s_1)$ and $N(s_2)$ of $s_1$ and $s_2$ respectively,
then pasting $S^3-(N(s_1)\sqcup N(s_2))$ along $\partial N(s_1)$ and
$\partial N(s_2)$ suitably, we obtain an NS flow $\phi_t$ on $S^1
\times S^2$ with Lyapunov graph $L$.
\end{proof}

The following lemma is due to J. Franks \cite{F5}:

\begin{lemma}\label{lemma5.3}
Let $A_{n\times n}$ be a nonnegative irreducible integer matrix,
then for any $N> 0$, there exists a nonnegative irreducible integer
matrix $A'=(a'_{ij})_{m\times m}$ such that:
\begin{enumerate}
\item the SSFT with $A$ is conjugate to the SSFT
with  $A'$;
\item $a'_{ij}> N$ for any $i,j \in \{1,...,m\}$ and $a'_{ij}$ is
an even number if $i\neq j$.
\end{enumerate}
\end{lemma}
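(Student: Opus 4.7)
The content is purely symbolic-dynamical: given an irreducible nonnegative integer matrix $A$, I must exhibit another such matrix $A'$ whose associated edge shift is topologically conjugate to that of $A$ (so their suspensions are topologically equivalent as flows), whose entries all exceed $N$, and whose off-diagonal entries are even. My plan is to perform a sequence of state-splittings on the directed graph $G_A$ with adjacency matrix $A$; such operations preserve the conjugacy class of the edge shift by Williams' classification theorem, which identifies topological conjugacy of subshifts of finite type with strong shift equivalence of their defining matrices.

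First I would enlarge every entry. Fix a large integer $M$, to be chosen later in terms of $N$ and $n$. By iteratively splitting states and redistributing their outgoing edges according to suitable partitions, and then performing compatible in-amalgamations, one produces a matrix $A''$, strong shift equivalent to $A$, whose every entry exceeds $M$. The key input is irreducibility of $A$: for any pair of states it supplies arbitrarily many distinct paths between them, and after re-encoding long paths as single edges (via the passage to a higher-block graph and a subsequent amalgamation) every matrix entry can be populated with an arbitrarily large multiplicity.

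Second, I would impose the parity condition by one further round of state-splittings tailored to alter each odd off-diagonal entry of $A''$ while preserving the evenness of the others; each such correction changes only a bounded number of entries by additive constants depending on the dimension of $A''$, not on $M$. The main obstacle will be the simultaneous control of size and parity, together with the book-keeping needed to check that irreducibility and the lower bound $N$ survive every step. I would handle this by fixing the parity-correction construction first, computing the worst-case additive perturbation $C$ it introduces, and only then choosing $M \geq N + C$; topological conjugacy of $\sigma(A)$ and $\sigma(A')$ then follows from Williams' theorem, and irreducibility persists because the underlying graph remains strongly connected after every splitting.
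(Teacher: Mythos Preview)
The paper does not contain a proof of this lemma at all: it simply attributes the result to Franks' paper \cite{F5} (``The following lemma is due to J.~Franks'') and states it without argument. So there is nothing in the present paper to compare your attempt against.

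That said, your outline is along the right lines and matches what Franks actually does. Franks' argument (his Lemma~4.1) proceeds exactly by Williams' strong shift equivalence: he replaces $A$ by a higher-block presentation and then performs a specific sequence of state-splittings and amalgamations, verifying by hand that the resulting matrix has the desired size and parity properties. Your two-stage plan (enlarge first, then correct parity, choosing the enlargement threshold $M$ only after bounding the parity-correction perturbation $C$) is a reasonable reorganization of the same mechanism.

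Where your write-up is thin is in the first stage: the sentence ``after re-encoding long paths as single edges \dots every matrix entry can be populated with an arbitrarily large multiplicity'' hides the actual work. Passing to a higher-block graph enlarges the vertex set and typically \emph{decreases} individual entries rather than increasing them; one needs a subsequent amalgamation step, and you have to argue that the amalgamation can be chosen so that the resulting entries are uniformly large. Similarly, for the parity step you assert that a single round of splittings can flip each odd off-diagonal entry while preserving the others' parity, but you do not exhibit the splitting. None of this is wrong, but as written it is a strategy rather than a proof; if you want a self-contained argument you should either spell out the explicit splittings (as Franks does) or cite his lemma directly, which is what the paper under review chose to do.
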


\begin{lemma}\label{lemma5.4}
Let $A_{n\times n}$ be a nonnegative irreducible integer matrix. For
any $e^+, e^- \in \mathbb{N}$ with $e^+ +e^- =k$, there exists an NS
flow $\psi_t$ on a compact 3-manifold $N$ such that:
\begin{enumerate}
\item the chain recurrent set of $\psi_t$ is conjugate to the SSFT
with $A$;
\item $\psi_t$ is transverse to $\partial N$ and moreover, the entrance set and the exit set  of
$\psi_t$ on $\partial N$ are composed of $e^+$ tori and  $e^-$ tori
respectively;
\item $N$ can be embedded into $S^1 \times S^2$ by an embedding map $i:N\rightarrow S^1 \times
S^2$ such that $\overline{S^1 \times S^2 -i(N)}\cong (e^+ +e^-) (S^1
\times D^2)$.
\end{enumerate}
\end{lemma}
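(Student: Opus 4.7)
The plan is to combine Franks' template/building-block construction for one-dimensional basic sets with a specific embedding into $S^{1}\times S^{2}$ that exploits the nontrivial $S^{1}$ factor. The equality $e^{+}+e^{-}=k$ sits one below the $S^{3}$-threshold $k+1\le e^{+}+e^{-}$ from Theorem \ref{theorem2.4}, so this is precisely the ``extremal'' case whose realization distinguishes $S^{1}\times S^{2}$ from $S^{3}$; the payoff is that the complement, which would need $k+1$ solid tori in $S^{3}$, needs only $k$ in $S^{1}\times S^{2}$.

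First, I would apply Lemma \ref{lemma5.3} to replace $A$ by a conjugate matrix $A'=(a'_{ij})_{m\times m}$ with large entries and even off-diagonal entries. This normalization is what lets us build a smooth template whose branch lines can be arranged into tori rather than more complicated surfaces, and whose boundary count is easy to compute. Given $A'$, construct a Franks-style building block: take $m$ branch arcs (one per symbol), and for each $(i,j)$ attach $a'_{ij}$ parallel ribbons from the $i$-th arc to the $j$-th arc, respecting the order dictated by irreducibility; thicken the resulting branched surface to a 3-manifold $N_{0}$ carrying the semiflow whose inverse limit conjugates to $\sigma(A')$. Standard computations (as in \cite{F5}) identify the chain recurrent set of the thickened flow with $\sigma(A)$ and show $\partial N_{0}$ is transverse to the flow, decomposed as entrance/exit surfaces whose Euler characteristic is forced to be zero by Lemma \ref{lemma4.3}; the parity hypothesis on the $a'_{ij}$ ensures these surfaces are tori.

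Next I would count and redistribute boundary tori. The total number of boundary tori of $N_{0}$ produced by the naive construction is $k+1$ (this is the same count that drives Theorem \ref{theorem2.4}, since $H_{2}$ of the block, computed via Theorem \ref{theorem3.2}, is governed by $\ker(I-B)$). To reach the prescribed split $e^{+}+e^{-}=k$, perform a controlled boundary identification: pick one entrance torus and one exit torus of $N_{0}$ and glue them via a fiber-preserving homeomorphism so that the resulting manifold $N$ carries an NS flow with exactly $e^{+}$ entrance tori and $e^{-}$ exit tori and the same chain recurrent behavior. The freedom granted by Lemma \ref{lemma5.3} (the entries $a'_{ij}$ can be made arbitrarily large) makes it straightforward to rearrange the ribbons so that any prescribed $(e^{+},e^{-})$ with $e^{+}+e^{-}=k$ is achievable.

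Finally I would embed $N$ in $S^{1}\times S^{2}$. The un-identified building block $N_{0}$ embeds canonically in $S^{3}$ with complement $\bigsqcup_{i=1}^{k+1}(S^{1}\times D^{2})$ by Theorem \ref{theorem2.4}. Perform the identification above inside this model: choose two of the complementary solid tori bordering the two boundary tori being glued, remove tubular neighborhoods around standard cores, and reglue. A fibered neighborhood of an essential $S^{1}$ in the quotient is precisely an $S^{1}\times S^{2}$ summand, so the result is $S^{1}\times S^{2}$ (rather than $S^{3}$), and the $k+1$ complementary solid tori of the $S^{3}$-model collapse to $k$ complementary solid tori in $S^{1}\times S^{2}$, establishing condition (3). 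The main obstacle is the last step: verifying that the identification on $\partial N_{0}$ can be chosen to be compatible with standard meridional framings on both sides, so that filling the complement with $k$ solid tori in the correct framings produces $S^{1}\times S^{2}$; I would address this by using the flexibility from Lemma \ref{lemma5.3} (which allows the boundary tori to be unknotted and unlinked in $S^{3}$) to reduce the gluing choice to a computation in $H_{1}$ of a genus-one handlebody, where the $S^{1}\times S^{2}$ slope is uniquely characterized.
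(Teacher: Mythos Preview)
Your approach is genuinely different from the paper's, and as written it has two real gaps.

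\textbf{Counting.} You start from an $S^{3}$ building block $N_{0}$ with $k+1$ boundary tori and then identify one entrance torus with one exit torus. That identification removes \emph{two} boundary components, not one, so the resulting block has $k-1$ boundary tori rather than $k$. Correspondingly, in your surgery picture, removing the cores of two of the $k+1$ complementary solid tori and gluing the new torus boundaries does not leave $k$ solid tori in the complement of $N_{0}$: it leaves $k-1$ solid tori together with one $T^{2}\times[0,1]$ piece. Absorbing that piece into $N$ still gives only $k-1$ boundary tori. Starting from a block with $k+2$ boundary tori (which Theorem~\ref{theorem2.4} allows) repairs the arithmetic, but not the next point.

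\textbf{Chain recurrence.} This is the more serious issue. When you glue an exit torus $T^{-}$ to an entrance torus $T^{+}$, an orbit leaving $N_{0}$ through $T^{-}$ is fed back into $N_{0}$ through $T^{+}$; nothing prevents it from reaching $T^{-}$ again and closing up. In general this creates new periodic orbits (or at least new chain recurrence) disjoint from $\Lambda$, so condition~(1) of the lemma fails. Your parenthetical that ``the same chain recurrent behavior'' persists is exactly the step that needs justification, and it is false without an additional structural hypothesis on $N_{0}$ (for instance, that no orbit entering through $T^{+}$ ever exits through $T^{-}$). Franks' construction does not hand you such a hypothesis for free.

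The paper sidesteps both problems by working directly on $S^{1}\times S^{2}$ rather than by surgery on an $S^{3}$ model. It builds a Smale diffeomorphism $H$ on $S^{2}$ as follows: start from a gradient-like diffeomorphism with two sinks $a_{1},a_{2}$, two sources $r_{1},r_{2}$, and two saddles, symmetric under a pair of reflections $T_{1},T_{2}$; add $e^{+}-1$ source/saddle handles, $e^{-}-1$ sink/saddle handles, and $n-k$ nilpotent handles inside a fundamental domain $Y$ for $T=T_{2}\circ T_{1}$; use Franks' rectangle trick to merge all $k$ saddles and $n-k$ nilpotent handles into a single saddle basic set with matrix $A$; finally post-compose with $T$. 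The mapping torus of $H$ is $S^{1}\times S^{2}$, and because the suspension of a diffeomorphism introduces no recurrence beyond that of the map itself, the resulting flow $\psi'_{t}$ has exactly one saddle basic set (conjugate to $\sigma(A)$), $e^{+}$ closed-orbit sources, and $e^{-}$ closed-orbit sinks. The point of the reflections is that $T$ swaps $a_{1}\leftrightarrow a_{2}$ and $r_{1}\leftrightarrow r_{2}$, so these contribute only one sink orbit and one source orbit in the suspension; this is precisely what buys the drop from the $S^{3}$ threshold $k+1$ to $k$. Removing standard neighborhoods of the sinks and sources then yields $N$ with $e^{+}+e^{-}=k$ torus boundary components and solid-torus complement, and conditions (1)--(3) are immediate.

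If you want to rescue your surgery idea, you would need to construct $N_{0}$ so that the return map from $T^{+}$ to $T^{-}$ is empty; that amounts to re-deriving the paper's control over which rectangles see which boundary components, and at that point the suspension construction is both shorter and cleaner.
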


\begin{proof}
\emph{Step 1}. Construct a gradient-like diffeomorphism $f$ on
$S^2$.

We consider a 2-sphere $S^2$ with the spherical geometry. As Figure
\ref{figure4} shows, there exists a handle decomposition with two
0-handles, two 2-handles and two 1-handles. The handle decomposition
corresponds to a gradient-like flow $\varphi_t$ on $S^2$ with two
index 0 singularities  $a_{1}$ and  $a_{2}$, two index 2
singularities $r_{1}$ and $r_{2}$ and two index 1 singularities
$s_{1}$ and $s_{2}$. $C_1$ and $C_2$ are two orthogonal circles in
$S^2$. Figure \ref{figure4} (a) and Figure \ref{figure4} (b) both
show the handle decomposition. In Figure \ref{figure4} (b), $S^2$ is
identified with $\mathbb{C} \cup \{\infty\}$ and $r_2 =\infty$.
There exist two standard reflections $T_1$ and $T_2$ on $S^2$
leaving $C_1$ and $C_2$ fixed respectively. Set $f=T_2 \circ T_1
\circ \varphi_1: S^2\rightarrow S^2$. It is easy to check that $f$
is a gradient-like diffeomorphism on $S^2$ with an index 0 orbit
$\{a_1, a_2\}$ with periodic 2, an index 2 orbit $\{r_1, r_2\}$ with
periodic 2 and two index 1 singularities $\{ s_{1} \} \sqcup \{
s_{2} \}$.

\begin{figure}[htp]
\begin{center}
  \includegraphics[totalheight=5cm]{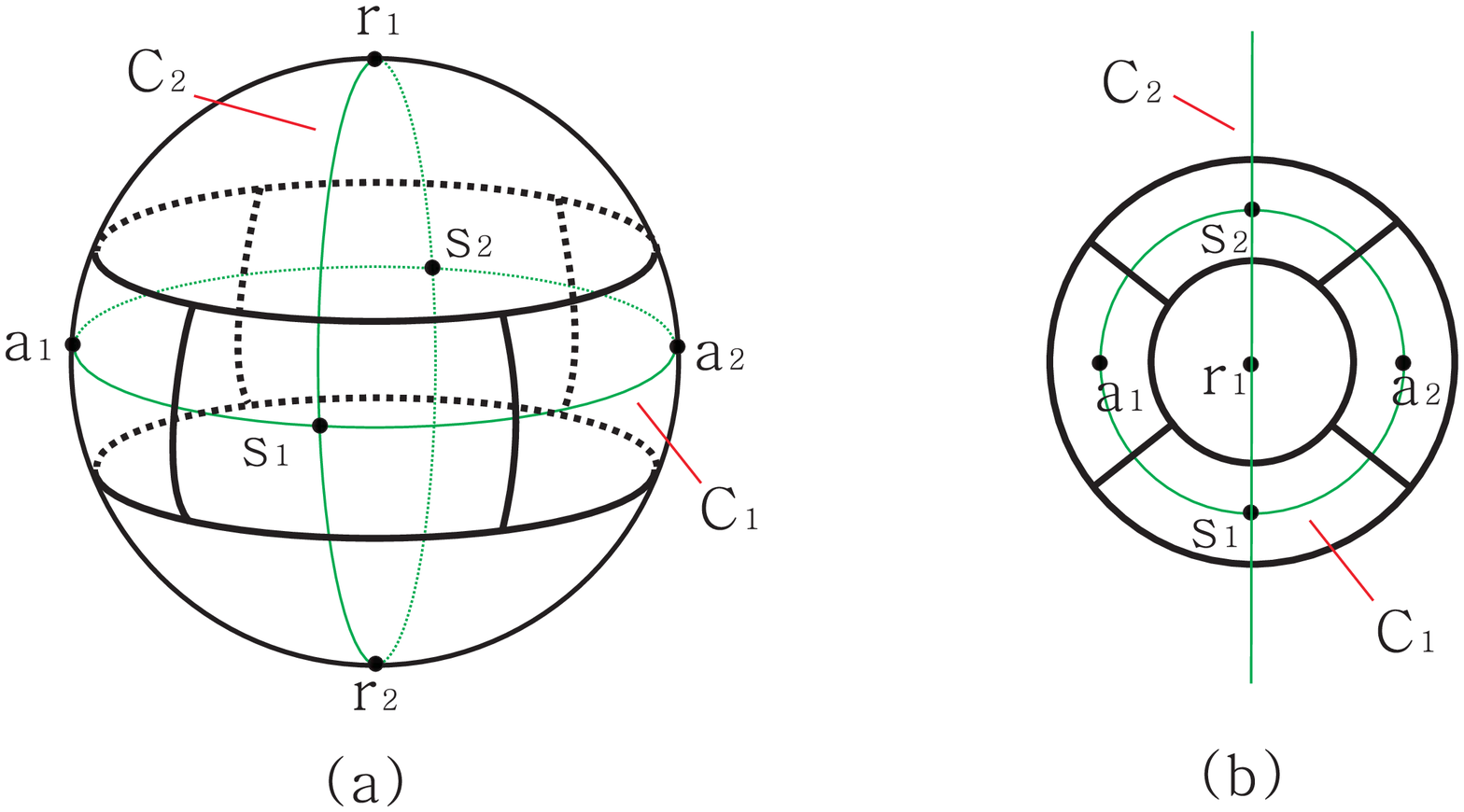}\\
  \caption{}\label{figure4}
  \end{center}
\end{figure}

%Now we construct a flow $\phi_t$ on $S^1 \times S^2$  as follows:
%$S^1 \times S^2 \cong [0,1] \times S^2/ (0,x)\sim (1,f(x))$ and
%$\phi_t (t_0, x)=(t_0 +t, x)$. It is easy to check that $\phi_t$ is
%an NMS flow on $S^1 \times S^2$ with chain current set: one closed
%orbit attractor $A=\{\phi_t (0, a_1)\}=\{\phi_t (0, a_2)\}$, one
%closed orbit repeller $R=\{\phi_t (0, r_1)\}=\{\phi_t (0, r_2)\}$
%and two index 1 closed orbits $S_1 =\{\phi_t (0, s_1)\}$ and $S_2
%=\{\phi_t (0, s_2)\}$. Cutting a small standard neighborhood (round
%0-handle) $N(A)$ of $A$ and a small standard neighborhood (round
%2-handle) $N(R)$ of $R$, we obtain a 3-manifold $N_1= \overline{S^1
%\times S^2- (N(A)\sqcup N(B))}$ with flow $\psi_{t}^1=\phi_t
%|_{N_1}$. Obviously $\psi_{t}^1$ is an NMS flow on $N_1$.

\emph{Step 2}. Add handles and construct a Morse-Smale
diffeomorphism $\phi_t$ on $S^2$.

Following J. Franks, we first introduce three types of handles, as
shown in Figure \ref{figure5}. The first one is composed of a
0-handle and a 1-handle (a neighborhood of a sink and that of a
saddle) and it is called an \emph{SI handle}; the second one is
composed of a 2-handle and a 1-handle and it is called an \emph{SO
handle} (a neighborhood of a source and a saddle); the last one
doesn't admit any singularity and it is called an \emph{NI handle}
(it is called a nilpotent handle in \cite{F5}).
\begin{figure}[htp]
\begin{center}
  \includegraphics[totalheight=3.6cm]{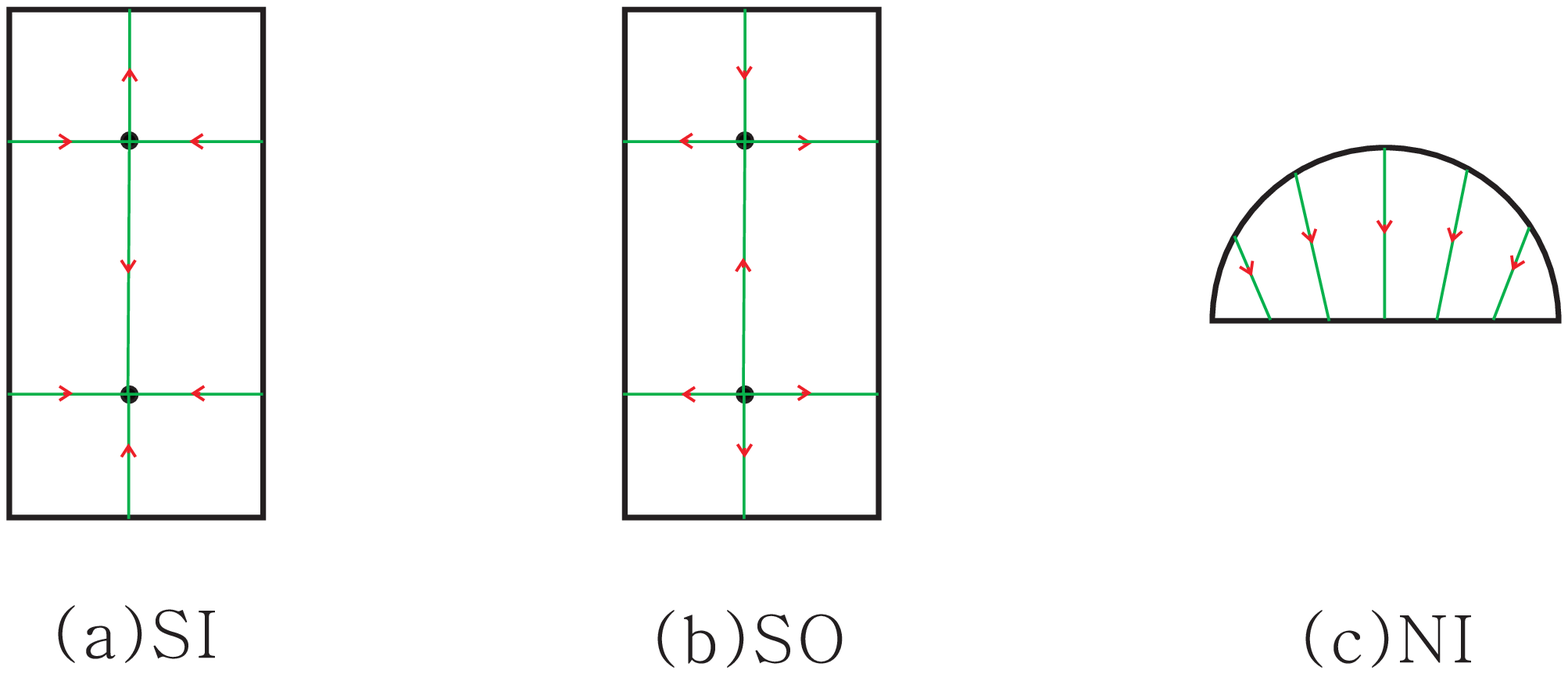}\\
  \caption{}\label{figure5}
  \end{center}
\end{figure}

Now we add handles to the handle decomposition in Step 1. In Figure
\ref{figure6}, the disk with boundary $L_3$ is denoted by $Y$. We
add $(e^+ -1)$ SO handles, $(e^- -1)$ SI handles and $(n-k)$ NI
handles to the handle decomposition in Step 1 in the interior of
$Y$. Suppose $\phi_t$ is a Morse-Smale diffeomorphism induced by the
handle decomposition after adding handles. Figure \ref{figure6}
shows an example for $e^+ =e^- =2$ and $n-k=1$.

\begin{figure}[htp]
\begin{center}
  \includegraphics[totalheight=7cm]{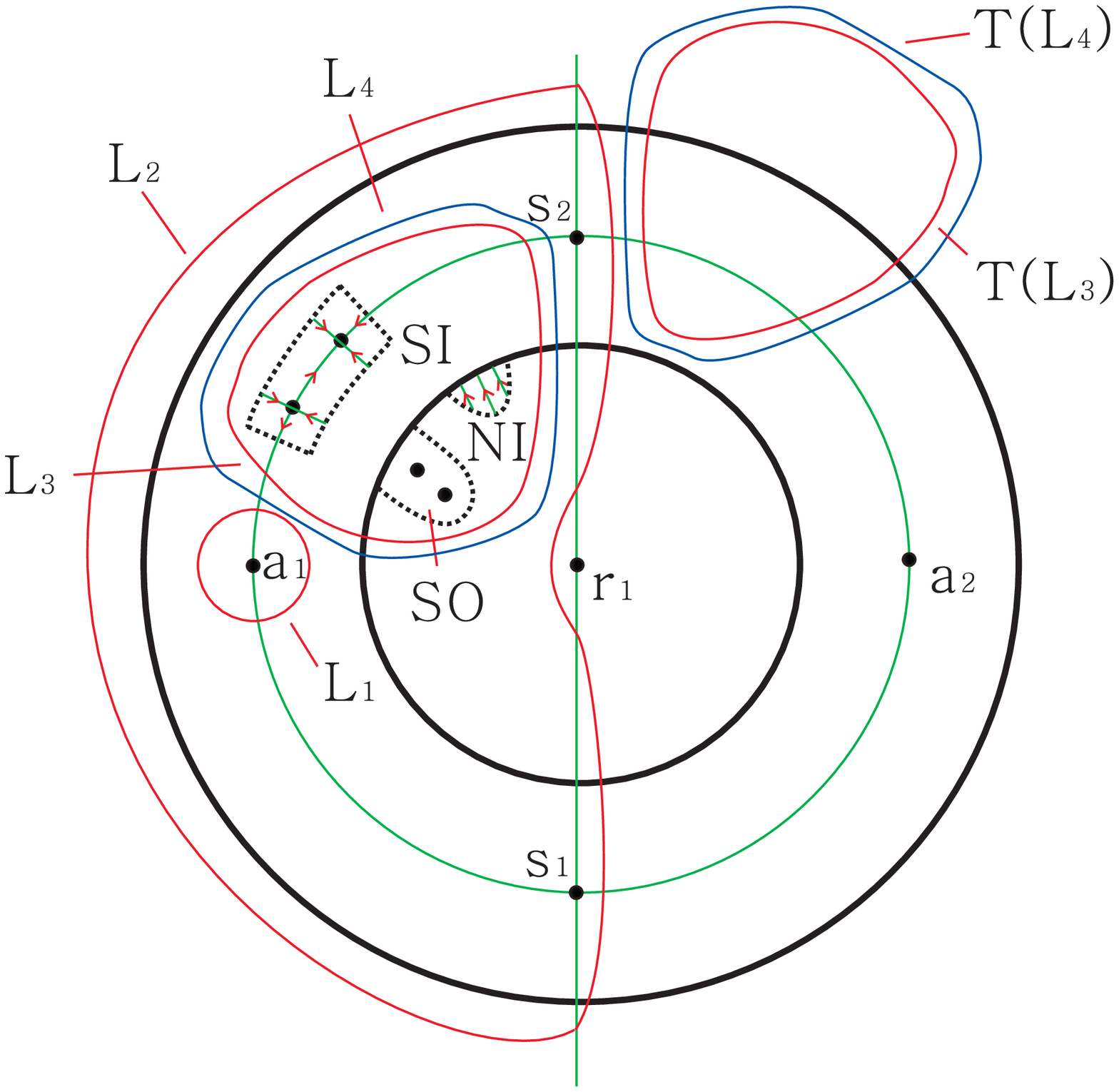}\\
  \caption{}\label{figure6}
  \end{center}
\end{figure}

\emph{Step 3}. Construct a Smale diffeomorphism $g$ on $S^2$.

Due to Lemma \ref{lemma5.3}, in order to prove Lemma \ref{lemma5.4},
we may assume that $a_{ij}$ is an even number if $i\neq j$ for any
$i,j \in \{1,...,n\}$. In Figure \ref{figure6}, we define
$X=\overline{D_2 -D_1}$ where $D_1$ ($D_2$) is the disk with
boundary $L_1$ ($L_2$). Moreover, set $X_0 = \overline{X-D}$ where
$D$ is a small standard neighborhood of the sinks and sources of the
added handles. The number of the saddle singularities in $X$ is
$(e^+ -1)+(e^- -1)+1+1=e^+ + e^- =k$ and the number of NI handles is
$n-k$. By the trick in the proof of Lemma 4.2 in \cite{F5}, we
obtain a Smale diffeomorphism $g$ on $S^2$ such that:
\begin{enumerate}
\item if $x$ is in $S^2 -X_0$ or in the right side of $C_2$ (see Figure \ref{figure4}), then $g(x)=\phi_1 (x)$;
\item there exists only one basic set $\Lambda$ in $X_0$ and $\Lambda$
is a dimension 1 saddle basic set with matrix $A$;
\item there exists a Markov partition of $\Lambda$ with $n$ rectangles $R_1 \cup ... \cup R_n$
such that:
\begin{enumerate}
\item each rectangle corresponds to a saddle singularity or an
NI handle;
\item the rectangles corresponding the saddle
singularities in $Y$ and the NI handles are in the interior of $Y$;
\item the geometric intersection matrix of the rectangles is $A$.
\end{enumerate}
\end{enumerate}

\emph{Step 4}. Construct an NS flow $\psi_t$ on a compact 3-manifold
$N$.

In Figure \ref{figure6}, the disk with boundary $L_4$ is denoted by
$Y'$ satisfying $\interior Y \subset Y'$ and $T(Y') \cap X
=\emptyset$. Here $T=T_{2} \circ T_{1}$. We define a smooth function
$h: S^2 \rightarrow \mathbb{R}$ such that:

\begin{eqnarray}\label{7}
h(x)\left\{
\begin{split}
&=0, ~if~ x\in T(Y)\\
&\in [0,1], ~if~ x\in T(Y')-T(Y)\\
&=1, ~if~ x\in \overline{S^2 -T(Y')}
\end{split}
 \right.
\end{eqnarray}

We also define a map $F:S^2\rightarrow S^2$ such that:
\begin{eqnarray}\label{8}
F(x)=\left\{
\begin{split}
&g(x), ~if~ x\in X\\
&\phi_{h(x)}(x), ~if~ x\in S^2 -X
\end{split}
\right.
\end{eqnarray}

By (\ref{7}) and the properties of $g$, it is easy to show that $F$
is a diffeomorphism on $S^2$. Define a diffeomorphism $H$ on $S^2$
by $H=T\circ F$.

By identifying $S^1\times S^2$ with the quotient space $[0,1]\times
S^2/(0,x)\sim (1,H(x))$, we define the flow $\psi'_t$ by $\psi'_t
(t_0, x)=(t_0 +t, x)$.  It is easy to verify that $\psi'_t$ is an NS
flow on $S^1\times S^2$ with $e^+$ closed orbit sources
$r^1,...,r^{e^+}$, $e^-$ closed orbit sinks $a^1,..., a^{e^-}$ and a
saddle basic set $\Lambda$ with matrix $A$. Suppose $N(r^i)$
($N(a^j)$) is a small standard neighborhood of $r^i$ ($a^j$). Denote
$\overline{S^1 \times S^2 -
\cup_{i=1}^{e^+}N(r^i)-\cup_{j=1}^{e^-}N(a^j})$ by $N$ and
$\psi'_t|_N$ by $\psi_t$. By the properties of $\psi'_t$, it is easy
to show that $\psi_t$ satisfies (1), (2) and (3) of Lemma
\ref{lemma5.4}.
\end{proof}

\begin{theorem} \label{theorem5.5}
Let $L$ be an abstract Lyapunov graph, then $L$ is associated with
an NS flow $\phi_{t}$ on $S^{1}\times S^{2}$ such that each regular
level set is separable if and only if the following conditions hold:
\begin{enumerate}

\item The underlying graph $L$ is a tree with exactly one edge attached
to each closed orbit sink or closed orbit source vertex. The weight
of each edge of $L$ is 1.

\item If a vertex is labeled with an SSFT with matrix $A_{n\times
n}$, then
\begin{equation}\label{1}
 \begin{split}
    &0< e^{+}\leq k+1, 0< e^{-}\leq k+1 \text{\ and}\\
    &k\leq e^{+}+ e^{-}.
 \end{split}\nonumber
\end{equation}
\item There is at most 1 vertex satisfying that $k= e^{+}+ e^{-}$.
\end{enumerate}
\end{theorem}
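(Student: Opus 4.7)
The plan is to derive necessity directly from Proposition \ref{proposition4.3} and Proposition \ref{proposition4.5}, and to obtain sufficiency by splitting on whether a vertex with $k=e^{+}+e^{-}$ exists, then building the flow out of Franks' Theorem \ref{theorem2.4} (on $S^{3}$) combined with either a regluing trick or Lemma \ref{lemma5.4}.

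\emph{Necessity.} Let $\phi_{t}$ be an NS flow on $S^{1}\times S^{2}$ with every regular level set separable and associated Lyapunov graph $L$. Proposition \ref{proposition4.3}(1) immediately gives that every regular level set is a torus and that $L$ is a tree, yielding condition (1); closed orbit sink/source vertices have a single incident edge by the standard solid torus neighborhood picture. Proposition \ref{proposition4.5}(2) gives $0<e^{\pm}\leq k+1$ and $k\leq e^{+}+e^{-}$ at every SSFT vertex, together with the ``at most one vertex'' clause, providing conditions (2) and (3).

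\emph{Sufficiency.} Suppose $L$ satisfies (1)--(3). \textbf{Case A:} no vertex has $k=e^{+}+e^{-}$, so every vertex satisfies Franks' strict inequality $k+1\leq e^{+}+e^{-}$, and Theorem \ref{theorem2.4} produces an NS flow $\psi_{t}$ on $S^{3}$ with Lyapunov graph $L$. Pick a leaf of $L$, i.e., a closed orbit sink or source whose corresponding closed orbit is (by Franks' construction) a trivial knot $K\subset S^{3}$ with standard solid torus neighborhood $N(K)$. Since $\partial N(K)$ is transverse to $\psi_{t}$, we may cut $S^{3}$ along this torus and reglue the two solid tori via the meridian--longitude swap; this converts $S^{3}$ into $S^{1}\times S^{2}$ while leaving the flow intact (any diffeomorphism of the boundary torus is flow-compatible because the flow is transverse there), yielding an NS flow on $S^{1}\times S^{2}$ with Lyapunov graph $L$ and all level sets separable tori.

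\textbf{Case B:} there is a unique vertex $V^{*}$ with $k^{*}=e^{+*}+e^{-*}$. Apply Lemma \ref{lemma5.4} to the matrix $A^{*}$ labeling $V^{*}$ with data $(e^{+*},e^{-*})$ to obtain an NS flow on a compact 3-manifold $N^{*}\hookrightarrow S^{1}\times S^{2}$ whose chain recurrent set realizes $V^{*}$, whose boundary consists of $e^{+*}+e^{-*}$ tori (with $e^{+*}$ entering and $e^{-*}$ exiting), and whose complement $\overline{(S^{1}\times S^{2})-N^{*}}$ is a disjoint union of $e^{+*}+e^{-*}$ solid tori $\{W_{i}\}$. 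For each edge $E$ at $V^{*}$, let $L_{E}$ be the maximal subtree of $L$ on the far side of $E$ and cap the cut end of $E$ with a closed orbit sink or source vertex $U_{E}$ (sink if $E$ was oriented out of $V^{*}$, source otherwise), producing a tree $L_{E}'$. Every SSFT vertex of $L_{E}'$ (inherited from $L-\{V^{*}\}$) satisfies the strict Franks inequality, so by Theorem \ref{theorem2.4} there is an NS flow on $S^{3}$ realizing $L_{E}'$. Deleting the standard solid torus neighborhood of $U_{E}$ and inserting the resulting piece into the corresponding $W_{i}$ via a flow-compatible diffeomorphism yields, after doing this for every edge at $V^{*}$, an NS flow on $S^{1}\times S^{2}$ with Lyapunov graph $L$.

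The main technical obstacle is verifying flow-compatibility of the gluings in Case B: the torus $\partial W_{i}$ coming from Lemma \ref{lemma5.4} must be identified with the torus boundary of the deleted cap neighborhood from the $S^{3}$ construction so that the two boundary flows match. Since both sides are tori transverse to their respective flows (entrance or exit depending on whether the end is a source or sink of the attached piece), this reduces to choosing the correct framing; the argument mirrors the end-of-proof gluings in Theorem \ref{theorem5.1} and Theorem \ref{theorem5.2} and presents no new difficulty.
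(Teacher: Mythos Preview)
Your proof is correct and follows essentially the same route as the paper: necessity via Propositions \ref{proposition4.3}(1) and \ref{proposition4.5}(2), and sufficiency by splitting into the two cases and using Theorem \ref{theorem2.4} on $S^{3}$ together with Lemma \ref{lemma5.4} for the exceptional vertex. One small omission: in Case B you should state, as you did in Case A (and as the paper does explicitly), that by Franks' construction the capping closed orbit $U_{E}$ may be taken to be an unknot, so that $S^{3}\setminus N(U_{E})$ is a solid torus; without this the ``inserting the resulting piece into the corresponding $W_{i}$'' step does not yield $S^{1}\times S^{2}$.
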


\begin{proof}
\emph{The necessity}. Let $\phi_t$ be an NS flow on $S^1 \times S^2$
such that each regular level set is separable. Suppose $L$ is a
Lyapunov graph of $\phi_t$. By (1) in Proposition
\ref{proposition4.3}, the underlying graph $L$ is a tree and the
weight of each edge of $L$ is 1. (2) and (3) follow from (2) in
Proposition \ref{proposition4.5}.

 \emph{The sufficiency}.
  Let $L$ be a Lyapunov graph which satisfies
(1), (2) and (3).

If there doesn't exist a vertex in $L$ with $k= e^{+}+ e^{-}$. Then
by Theorem \ref{theorem2.4} and the fact that we can paste $S^1
\times S^2$ up as two solid tori, it is easy to show that there
exists an NS flow $\phi_t$ on $S^1 \times S^2$ with Lyapunov graph
$L$.

If there exists a vertex in $L$ with $k= e^{+}+ e^{-}$. We cut $L$
along all the entrance edges and exit edges of $V$ and paste $e^+$
closed orbit sink vertices $V_{1}^+,...,V_{e^+}^+$ and $e^-$ closed
orbit source vertices $V_{1}^-, ...,V_{e^-}^-$ to the cut components
of $L$ without $V$ suitably (similar to the surgery in Figure
\ref{figure2}). Therefore, we obtain $k$ abstract Lyapunov graphs:
$L_{1}^+,..., L_{e^+}^+$ and $L_{1}^-,...,L_{e^-}^-$. Here $L_{i}^+$
and $L_{j}^-$ contain vertices  $V_{i}^+$ and $V_{j}^-$
respectively. It is easy to check that $L_{i}^+$ and $L_{j}^-$
satisfy the sufficient condition of Theorem \ref{theorem2.4} for any
$i\in \{1,...,e^+\}$ and $j\in \{1,...,e^-\}$. Therefore, there
exist NS flows $\varphi^{i}_t$ and $\psi^{j}_t$ with Lyapunov graphs
$L_{i}^+$ and $L_{j}^-$ respectively. Let $a_i$ and $r_j$ be the
closed orbits associated with $V_{i}^+$ and $V_{j}^-$ respectively.
By the constructions in \cite{F5}, we can suppose that $a_i$ and
$r_j$ are all trivial knots.

For $e^+$, $e^-$ and $A$, we choose a compact 3-manifold $N$ and an
NS flow $\psi_t$ in Lemma \ref{lemma5.4}. Cut a standard
neighborhood $N(a_i)$ of $a_i$ and a standard neighborhood $N(r_j)$
of $r_j$ and paste $S^3 - N(a_i)$ and $S^3 -N(r_j)$ to an entrance
set of $\partial N$ and an exit set of $\partial N$ respectively.
Doing the surgeries above for any $i\in \{1,...,e^+\}$ and $j\in
\{1,...,e^-\}$, we obtain a closed 3-manifold $M$.  $a_i$ and $r_j$
are all trivial knots. Moreover,
 $N$ can be embedded into $S^1 \times S^2$ and there exists an embedding map $i:N\rightarrow S^1 \times
S^2$ such that $\overline{S^1 \times S^2 -i(N)}\cong (e^+ +e^-) (S^1
\times D^2)$. Therefore, if the surgeries are suitable, then:
\begin{enumerate}
\item $M\cong S^1 \times S^2$;
\item $\varphi^{i}_t$ on $S^3 - N(a_i)$, $\psi^{j}_t$ on $S^3
-N(r_j)$ and $\psi_t$ on $N$ form an NS flow $\phi_t$ on $S^1 \times
S^2$ with Lyapunov graph $L$
\end{enumerate}
\end{proof}

\section{Singular Vertex}\label{section6}
Basic definitions and facts about 3-manifolds can be found in
\cite{He}.

A \emph{filtrating neighborhood} \cite{BB} of a dimension 1 basic
set $K$ of an NS flow on a 3-manifold is a neighborhood $U$ such
that:
\begin{enumerate}

\item $K$ is the maximal invariant set (for $\phi_{t}$) in $U$.
 \item The intersection of any orbit of $\phi_{t}$ with $U$ is
 connected (in other terms, any orbit getting out of $U$ never comes
 back).
\end{enumerate}

In \cite{BB}, F. B\'eguin and C. Bonatti proved that for a given
dimension 1 basic set $K$, the filtrating neighborhood of $K$ is
unique up to topological equivalence. For an NS flow $\phi_t$ on a
3-manifold $M$, let $g:M\rightarrow R$ be a Lyapunov function
associated to $\phi_t$ and $L$ be a Lyapunov graph associated with
$g$. Suppose $K$ is a basic set of $\phi_t$, $g(K)=y_0$ and $K$
corresponds to the vertex $v\in L$. Then it is easy to show that the
connected component  $U\subset g^{-1}(y_0 - \epsilon, y_0 +
\epsilon)$ which contains $K$ is the filtrating neighborhood of $K$.
This observation  indicates that the topological structures of all
Lyapunov graphs of an NS flow on a 3-manifold are the same. We also
call the filtrating neighborhood of $K$ the filtrating neighborhood
associated with $v$.

\begin{definition}
Let $L$ be a Lyapunov graph associated with an NS flow on a closed
orientable 3-manifold $M$. If a vertex $v$ of $L$ satisfies that the
filtrating neighborhood associated with $v$ is not homeomorphic to a
link complement, then we call the vertex $v$ a \emph{singular
vertex}. Otherwise, we call the vertex $v$ a \emph{nonsingular
vertex}.
\end{definition}

\begin{remark}
Oka \cite{Ok} also introduced the concept singular vertex. But our
definition  differs from his.
\end{remark}

By a similar proof as in Proposition \ref{proposition4.5} (1), it is
easy to show the following proposition.

\begin{proposition}\label{proposition6.3}
Let $L$ be a Lyapunov graph associated with an NS flow on a closed
orientable 3-manifold $M$. If $V$ is a nonsingular vertex, then
\begin{equation}
 \begin{split}
    &0< e^{+}\leq k+1, 0< e^{-}\leq k+1 \text{\ and}\\
    &k+1\leq e^{+}+ e^{-}.
 \end{split}\nonumber
\end{equation}
\end{proposition}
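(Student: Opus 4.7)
The plan is to imitate the argument for Proposition \ref{proposition4.5} (1), replacing the complement of an inseparable torus in $S^1\times S^2$ by the filtrating neighborhood of $V$ itself, and then appealing to Franks' classification on $S^3$ (Theorem \ref{theorem2.4}).

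First, let $U$ be the filtrating neighborhood associated with $V$, so that the basic set $K\subset U$ corresponding to $V$ is the unique basic set of $\phi_t$ inside $U$. By assumption, $U$ is homeomorphic to a link complement in $S^3$. In particular, we can fix an embedding $U\hookrightarrow S^3$ whose complement is a disjoint union of open solid tori, one for each boundary component of $U$. Because $M$ is orientable and $\phi_t$ is transverse to $\partial U$, Lemma \ref{lemma4.3} forces each component of $\partial U$ to be a torus; the $e^{+}$ incoming components correspond to entrance tori and the $e^{-}$ outgoing components to exit tori.

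Next, on each of the $e^{+}$ entrance tori, I would glue the standard round $2$-handle (a solid torus whose core is a closed orbit repeller), and on each of the $e^{-}$ exit tori I would glue the standard round $0$-handle (a solid torus whose core is a closed orbit attractor), matching the flow direction on the boundary. Since the filling of the link complement by solid tori gives back $S^3$, the result is an NS flow $\psi_t$ on $S^3$. Its chain recurrent set is $K$ together with the $e^{+}+e^{-}$ added closed orbits, and its Lyapunov graph $L'$ is a star: a central vertex labelled by the same SSFT matrix as $V$ (hence with the same $e^{+}$, $e^{-}$, $k$), with $e^{+}$ closed orbit source vertices attached by incoming edges and $e^{-}$ closed orbit sink vertices attached by outgoing edges, all edges of weight $1$.

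Finally, I would apply Theorem \ref{theorem2.4} to $\psi_t$. The central vertex of $L'$ satisfies
\begin{equation*}
 0< e^{+}\leq k+1,\qquad 0< e^{-}\leq k+1,\qquad k+1\leq e^{+}+ e^{-},
\end{equation*}
and since the integers $e^{+}$, $e^{-}$, $k$ at that vertex coincide with those at $V$ in the original Lyapunov graph $L$, the proposition follows. The only place where one has to be careful is the gluing step: one must check that the round handles can be attached so that the dynamics across $\partial U$ is compatible (giving a genuine NS flow on $S^3$) and that the embedding $U\hookrightarrow S^3$ produced by the ``link complement'' hypothesis is compatible with the transverse flow on $\partial U$. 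Both points are standard round-handle constructions already used in the proofs of Proposition \ref{proposition4.5} and Theorem \ref{theorem5.1}, so no new difficulty arises.
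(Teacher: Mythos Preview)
Your proposal is correct and follows essentially the same route the paper indicates: embed the filtrating neighborhood (a link complement by hypothesis) into $S^3$, cap off the boundary tori with round $0$- and $2$-handles to produce an NS flow on $S^3$, and read off the inequalities from Theorem~\ref{theorem2.4}. The only quibble is that your appeal to Lemma~\ref{lemma4.3} is unnecessary---the boundary components of $U$ are tori automatically because $U$ is a link complement---but this does not affect the argument.
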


Proposition \ref{proposition6.3} tells us that if a vertex is a
nonsingular vertex, the dynamics of the filtrating neighborhood
associated with the vertex is similar to the dynamics of a
filtrating neighborhood of an NS flow on $S^3$. Therefore, we should
study the filtrating neighborhood associated with a singular vertex.

\begin{example}
\begin{enumerate}
\item Obviously, any vertex of a Lyapunov graph of an NS flow on
$S^3$ is a nonsingular vertex.
\item The vertex $v$ corresponds to  the filtrating neighborhood $N$
and the flow $\psi_t$ in Lemma \ref{lemma5.4} is a singular vertex.
In fact, for such a vertex $v$, by Lemma \ref{lemma5.4}, $e^+ + e^-
=k$. Therefore, $v$ doesn't satisfy the necessary condition of a
nonsingular vertex in Proposition \ref{proposition6.3}.
\item For NS flows on $S^1
\times S^2$, by Theorem \ref{theorem5.1}, Theorem \ref{theorem5.2}
and Theorem \ref{theorem5.5}, we have:
\begin{enumerate}
  \item  Let $L$ be a Lyapunov graph associated with an NS flow on $S^1
\times S^2$. If the weight of any edge of $L$ is 1, then there
exists at most 1 singular vertex in $L$;
  \item For any $n\in \mathbb{N}$, there exists an NS
flow $\phi_t$ such that there exist $n$ singular vertices in $L$.
Here $L$ is a Lyapunov graph associated with $\phi_t$.
\end{enumerate}
\end{enumerate}
\end{example}

Now we consider the number of the singular vertices in a Lyapunov
graph associated with an NS flow on an irreducible, closed
orientable 3-manifold.

By Dehn's lemma and some combinatorial surgeries, it is easy to show
the following lemma.

\begin{lemma} \label{lemma6.4}
Let $T$ be a compressible torus in an irreducible, closed orientable
3-manifold $M$. Then $T$ bounds a knot complement in $M$.
\end{lemma}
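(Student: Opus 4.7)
The plan is to compress $T$ along a disk provided by Dehn's lemma, use irreducibility of $M$ to conclude that the compressed 2-sphere bounds a 3-ball, and read off that one side of $T$ must be a solid torus. The other side is then the closure of the complement of an open solid torus in $M$, i.e., a knot complement in $M$, and the lemma follows.

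First I would note that $T$ must be separating: otherwise any compression would produce a non-separating 2-sphere, which cannot exist in an irreducible 3-manifold. Write $M=A\cup_T B$ for the two sides. Compressibility of $T$ together with Dehn's lemma supplies a properly embedded disk $D\subset M$ with $\partial D=c$ an essential (hence non-separating) simple closed curve on $T$; without loss of generality $D\subset A$. Let $N(D)\cong D\times[-1,1]$ be a regular neighborhood of $D$ in $A$, with $N(c):=\partial D\times[-1,1]$ an annular collar of $c$ on $T$. The surgered surface
\[
S \;:=\; \bigl(T\setminus \interior N(c)\bigr)\cup\bigl(D\times\{-1,1\}\bigr)
\]
is a 2-sphere in $M$ that separates it into $A' := A\setminus \interior N(D)$ and $B\cup N(D)$; irreducibility of $M$ forces at least one of these to be a 3-ball.

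I would then split into the two resulting cases. If $A'$ is a 3-ball, then $A=A'\cup N(D)$ is that ball with the 1-handle $N(D)=D\times[-1,1]$ attached along $D\times\{-1,1\}\subset \partial A'=S$; since $M$ is orientable, $A$ is a solid torus. If instead $B\cup N(D)$ is a 3-ball, then $N(D)$ sits inside it as a ``tunnel'' meeting the boundary sphere in the two disks $D\times\{-1,1\}$, and its core arc $\alpha:=\{0\}\times[-1,1]$ is a properly embedded arc in this 3-ball and hence unknotted, so drilling it out yields a solid torus. Since $B$ equals the exterior of this tunnel, $B$ is a solid torus. In both cases some side of $T$ is a solid torus $V$, and then the other side is $M\setminus \interior V$, the complement of the core knot of $V$ in $M$, i.e., a knot complement in $M$.

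The main obstacle I anticipate is Case 2: one must identify the remainder $B$ of the ball after removing $N(D)$ as a solid torus. I plan to handle it by observing that $N(D)$ is a regular neighborhood of the properly embedded arc $\alpha$, that $\alpha$ is unknotted because the fundamental group of a ball is trivial, and that drilling an unknotted arc out of a 3-ball yields a solid torus (via the picture $D^3\cong D^2\times[-1,1]$ with the arc identified as $\{0\}\times[-1,1]$). Everything else---Dehn's lemma, the separation observation, and the final step that a solid-torus side forces the opposite side to be a knot complement---is standard.
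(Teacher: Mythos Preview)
Your overall strategy---compress $T$, use irreducibility to get a ball on one side of the resulting sphere, and analyse the two cases---is the right one and matches the paper's one-line hint. The gap is in Case~2 and, behind it, in your reading of ``knot complement''.

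You assert that the core arc $\alpha$ is unknotted in the ball $B\cup N(D)$ ``because the fundamental group of a ball is trivial'', and conclude that $B$ is a solid torus. This reasoning is invalid: unknottedness of a properly embedded arc $\alpha\subset B^3$ concerns $\pi_1(B^3\setminus\alpha)$, not $\pi_1(B^3)$, and knotted arcs in a ball abound. In fact the statement you are really trying to prove---that one side of $T$ is a solid torus, equivalently that the other side is the complement of a knot \emph{in $M$}---is false in general. Take any irreducible $M$ with $\pi_1(M)\neq 1$, a ball $B^3\subset M$, and a knotted arc $\alpha\subset B^3$, and set $T=\partial\bigl(B^3\setminus \interior N(\alpha)\bigr)$. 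A meridian disk of $N(\alpha)$ compresses $T$; the side $B=B^3\setminus \interior N(\alpha)$ is a nontrivial knot exterior, while the other side $A=(M\setminus \interior B^3)\cup N(\alpha)$ has $\pi_1(A)\cong\pi_1(M)\ast\mathbb{Z}\neq\mathbb{Z}$; so neither side is a solid torus.

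What the paper means by ``knot complement'' (consistently with Lemma~\ref{lemma3.4}, Corollary~\ref{corollary3.5} and Corollary~\ref{corollary4.2}) is a manifold homeomorphic to the exterior of a knot in $S^3$. With that reading both cases go through without any unknottedness claim. In Case~1, $A$ itself is a solid torus, i.e.\ the unknot exterior in $S^3$. In Case~2, embed the ball $B\cup N(D)\cong B^3$ standardly in $S^3$; then $S^3\setminus\interior B=(S^3\setminus\interior B^3)\cup N(D)$ is a ball with an orientable $1$-handle attached, hence a solid torus, so $B$ is a knot exterior in $S^3$ regardless of whether $\alpha$ is knotted.
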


\begin{proposition}
Let $L$ be a Lyapunov graph associated with an NS flow $\phi_t$ on
an irreducible, closed orientable 3-manifold $M$. Then there exist
at most $H(M)+1$ singular vertices
 in $L$ where $H(M)$ is the Haken number of $M$.
\end{proposition}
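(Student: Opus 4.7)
The plan is to show that in an irreducible $M$, two distinct singular vertices of $L$ must be separated in $L$ by at least one edge whose torus is incompressible in $M$, and then to count these incompressible tori using $H(M)$.

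Since $M$ is irreducible, $M$ is not of the form $M'\sharp n(S^1\times S^2)$ with $n\ge 1$. Hence by Theorem \ref{theorem2.6} every regular level set of $\phi_t$ is a single torus, so each edge $e$ of $L$ carries a single embedded torus $T_e\subset M$. By Lemma \ref{lemma6.4} each $T_e$ is either incompressible in $M$ (call $e$ an \emph{incompressible edge} in that case) or bounds a knot complement in $M$.

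The central step is the following \emph{key claim}: for any two singular vertices $v\neq v'$ of $L$, every path in $L$ from $v$ to $v'$ uses at least one incompressible edge. Granting this, delete all incompressible edges from $L$ to form a subgraph $L'$; each connected component of $L'$ then contains at most one singular vertex. The incompressible edge-tori are pairwise disjoint level sets in $M$; a maximal pairwise non-parallel subcollection has cardinality at most $H(M)$, and parallel level tori cobound $T^2\times I$ regions in $M$ whose graph-theoretic contributions to $L'$ can be absorbed without increasing the number of components that carry a singular vertex. A careful accounting then yields at most $H(M)+1$ such components, bounding the number of singular vertices by $H(M)+1$.

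To establish the key claim, suppose for contradiction a path $v=v_0-v_1-\cdots-v_r=v'$ along edges $e_1,\ldots,e_r$ has every $T_{e_i}$ compressible, each bounding a knot complement $N_i\subset M$. Mimicking the surgery argument of Proposition \ref{proposition4.5}(1), we successively excise the $N_i$ lying on the side of $T_{e_i}$ away from $U_v$ and fill in standard solid tori carrying closed orbit sinks or sources, producing an NS flow on $S^3$ in which $U_v$ still appears as a filtrating neighborhood; by Theorem \ref{theorem2.4}, $U_v$ must then be a link complement in $S^3$, contradicting the singularity of $v$. The main obstacle is this surgery step: one must ensure that for each compressible $T_{e_i}$ the knot complement $N_i$ can be arranged to lie away from $U_v$, since a naive surgery on a $T_{e_i}$ whose knot complement lies on the $U_v$-side would destroy $U_v$. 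Handling this requires an inductive ordering of the surgeries along the path (say from $v'$ inward) together with an interchange of the roles of the two sides of certain $T_{e_i}$; verifying that the reductions can always be carried out consistently, and that the resulting ambient 3-manifold is indeed $S^3$, is the principal technical hurdle of the proof.
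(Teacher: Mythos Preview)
Your overall strategy matches the paper's: reduce to tori level sets, separate the compressible from the incompressible ones, cut along a maximal non-parallel family of incompressible tori to get at most $H(M)+1$ pieces, and show each piece carries at most one singular vertex. The gap is in your proof of the key claim.

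The surgery argument does not work. Excising a knot complement $N_i$ and gluing in a solid torus is a Dehn filling of $M\setminus\interior N_i$; there is no reason this produces $S^3$, and iterating along the path does not improve matters. The analogy with Proposition~\ref{proposition4.5}(1) is false: there the complement of the single torus was \emph{already} a two-component link complement in $S^3$, so capping with solid tori yields $S^3$ for free. In your situation no such global $S^3$-structure is available, and your inductive scheme of ``surgering from $v'$ inward'' has no mechanism to force the ambient manifold toward $S^3$. Moreover, even if the surgeries along the path could be made to land in $S^3$, you have said nothing about the \emph{other} boundary tori of $U_v$ (those edges of $v$ not on the chosen path); without handling them you cannot conclude that $U_v$ sits in $S^3$ as a link complement.

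The paper's argument avoids surgery altogether and dissolves the very obstacle you flag. Work inside a single piece $M_j$. Any remaining level torus $\Sigma_e\subset M_j$ is either parallel to some boundary component $T_i$ of $M_j$, in which case one side of $\Sigma_e$ in $M_j$ is $T^2\times[0,1]$, or compressible in $M$ and hence (by an innermost-disk argument against the incompressible $T_i$'s) compressible in $M_j$, so by irreducibility one side of $\Sigma_e$ in $M_j$ is a knot complement. Either way, one side of $\Sigma_e$ in $M_j$ is a link complement, hence embeds in $S^3$; every filtrating neighborhood contained in that side therefore embeds in $S^3$ and is itself a link complement, i.e.\ corresponds to a nonsingular vertex. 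So whichever side of $\Sigma_e$ is the link-complement side, \emph{all} singular vertices of $M_j$ lie on the other side. Iterating this over the (nested) level tori inside $M_j$ leaves room for at most one singular vertex per piece. Notice that here the question ``on which side of $T_{e_i}$ does the knot complement lie?'' is irrelevant: whichever side it is, that side is cleared of singular vertices. This replaces both your surgery step and your ``interchange of the roles of the two sides'' manoeuvre.
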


\begin{proof}
 For each edge $e$ of $L$, we choose a regular level set $\Sigma_e$ associated with $e$.
 Set $\sum=\{\Sigma_e | e\in L\}$.
 By Theorem \ref{theorem2.6}, the weight of each edge of $L$ is 1.
 Therefore, each $\Sigma_e$ is homeomorphic to $T^2$.

 Let $\mathscr{T}=\{\mathcal {T}|\mathcal {T}=
 \{T_1, T_2, ... ,T_m\}\subset\sum\}$ where each $T_i$ is incompressible and $T_i$ is not parallel to $T_j$
 for any $i,j\in \{1,...,m\}$ and $i\neq j$. Obviously, $m\leq
 H(M)$. For any $\mathcal {T}=
 \{T_1, T_2, ... ,T_m\} \in \mathscr{T}$, let $d(\mathcal {T})$ be the number of the connected
 components of $M- (T_1\sqcup T_2\sqcup ...  \sqcup T_m)$. Obviously, $d(\mathcal {T})\leq m+1 \leq
 H(M)+1$. Therefore, there exists $\mathcal{T}_0 =\{T_1, T_2, ... ,T_{m_0}\}\in
 \mathscr{T}$ satisfying $d(\mathcal{T}_0)=\max_{\mathcal {T} \in
 \mathscr{T}}(d(\mathcal{T}))\leq
 H(M)+1$. Let $M_1, ...,M_{d(\mathcal{T}_0)}$ be the components
 of $M- (T_1\sqcup T_2\sqcup ...  \sqcup T_{m_0})$. For any
 $\Sigma_e \in \sum-\mathcal{T}_0$, there are exactly the following two cases.
 \begin{enumerate}
 \item If $\Sigma_e$ is incompressible in $M$, then $\Sigma_e$ is parallel to $T_i$
 for some $i\in \{1,...,m\}$. Therefore, $\Sigma_e$ and $T_i$ bound a
 compact 3-manifold $N_i$ which is homeomorphic to $T^2 \times
 [0,1]$.
 %Therefore, all of the vertices associated with the basic sets in $N_i$
 %are nonsingular.
 \item If $\Sigma_e$ is compressible in $M$, by Lemma \ref{lemma6.4}, then $\Sigma_e$ bounds a knot complement $N\subset M$.
%Therefore, all of the vertices associated with the basic sets in $N$
% are nonsingular.
 \end{enumerate}
 Suppose $\Sigma_e \subset M_j$ for some $j \in
 \{1,...,d(\mathcal{T}_0)\}$. In either of the above two cases, $\Sigma_e$
 divides $M_j$ into two parts $M_{j}^1$ and $M_{j}^2$ such that one of $M_{j}^1$ and $M_{j}^2$ is a link component.
 Therefore, by some trivial inductive arguments, we can show that in $M_i$
 for any $i\in \{1,...,d(\mathcal{T}_0)\}$, there exists at most 1 filtrating
 neighborhood associated with a singular vertex. Noticing that $M_{d(\mathcal{T}_0)}\leq
 H(M)+1$, we have that there exist at most $H(M)+1$ singular vertices
 in $L$.
\end{proof}

\section*{Acknowledgments} The author would like to thank Hao Yin for
his many helpful suggestions and comments.

\bibliographystyle{amsplain}

\end{document}